\numberwithin{equation}{section}
\newtheorem{theorem}{Theorem}[section]
\newtheorem{lemma}{Lemma}[section]
\newtheorem{proposition}{Proposition}[section]
\newcommand\re[1]{(\ref{#1})}
\def\R{\mathbb{R}}
\def\Z{\mathbb{Z}}
\def\S{\mathcal{S}}
\def\N{\mathbb{N}}
\def\NN{\mathcal{N}}
\def\ZZ{\mathcal{Z}}
\def\F{\mathcal{F}}
\def\L{\mathcal{L}}
\def\T{\mathbb{T}}
\def\eps{\varepsilon}
\def\supp{\mathop{\rm supp}\nolimits}
\newcommand\cro[1]{\langle #1 \rangle}
\begin{document}
\begin{center}
\noindent {\Large \bf{Sharp ill-posedness and well-posedness results for the KdV-Burgers equation: the periodic case}}
\end{center}
\vskip0.2cm
\begin{center}
\noindent
{\bf    Luc Molinet and St\'ephane Vento }\\
\end{center}
\vskip0.5cm \noindent {\bf Abstract.} { We prove that the KdV-Burgers is globally well-posed in $ H^{-1}(\T) $  with a solution-map that is analytic from $H^{-1}(\T) $ to  $C([0,T];H^{-1}(\T))$ whereas it is ill-posed
   in $ H^s(\T) $, as soon as $ s<-1 $, in the sense that the flow-map $u_0\mapsto u(t) $ cannot be continuous
    from $ H^s(\T) $ to even ${\cal D}'(\T) $ at any fixed $ t>0 $ small enough.  In view of the result of Kappeler and Topalov for KdV  it thus appears that even if the dissipation part of the KdV-Burgers equation
  allows to lower the $ C^\infty $ critical index with respect to the KdV equation,
       it does not  permit to improve the
      $ C^0$ critical index .
}\vspace*{4mm} \\
\vskip0.2cm

\section{Introduction and main results}
The aim of this paper is to establish positive and negative optimal results on the Cauchy problem in Sobolev spaces for the Korteweg-de Vries-Burgers (KdV-B) equation posed on the one dimensional torus $ \T=\R/2\pi \Z$:
\begin{equation}\label{KdVB}
u_t +u_{xxx}-u_{xx}+u u_x =0
\end{equation}
where $ u=u(t,x) $ is a real valued function.\\
This  equation has been derived by Ott and Sudan \cite{OS}  as an asymptotic  model for the propagation  of weakly nonlinear dispersive long waves in some physical contexts when dissipative effects occur.

In order to make our result more transparent,  let us first introduce   different notions of well-posedness (and consequently ill-posedness) related to the smoothness of the flow-map  (see in the same spirit \cite{KT}, \cite{Gerard}).
 Throughout this paper we shall say that  a Cauchy problem is  (locally)  $ C^0$-well-posed   in some  normed
  function space $ X $  if, for any initial data $ u_0\in X $, there exist a radius $ R>0 $, a  time
  $ T>0 $  and a unique solution $ u$,  belonging to some space-time function space
  continuously embedded in $ C([0,T];X) $, such that for any $ t\in [0,T] $ the map
   $ u_0\mapsto u(t) $ is continuous from the ball of $ X $ centered at $ u_0 $ with radius $ R $
    into $ X $.
    If   the map $ u_0\mapsto u(t) $ is of class $ C^k $, $k\in \N\cup\{\infty\} $, (resp. analytic) we will say that the Cauchy is $ C^k$-well-posed (resp. analytically well-posed).   Finally  a Cauchy problem will be  said to be $C^k $-ill-posed, $k\in  \N\cup\{\infty\} $,  if it is not $C^k $-well-posed.

 In \cite{MR2}, Molinet and Ribaud   proved that this  equation is analytically  well-posed in $ H^s(\T) $ as soon as $ s>-1 $. They also established that the index $ -1 $ is critical for
 the  $C^2 $-well-posedness.  The surprising part of this result was that the $ C^\infty $ critical index $ s_c^\infty(KdVB)=-1 $
   was lower that the one of  the KdV equation
   $$
   u_t +u_{xxx}+u u_x =0
   $$
   for which $s_c^\infty(KdV)=-1/2 $  (cf. \cite{KPV}, \cite{Co})  and also lower than the $ C^\infty $ index $ s_c^\infty(dB)=s_c^0(dB)=-1/2 $ (cf. \cite{Be}, \cite{Di1}) of the dissipative Burgers
   equation
   $$
   u_t -u_{xx}+u u_x =0\; .
   $$
    On the other hand, using the integrability theory, it was recently proved in \cite{KT} that the flow-map of KdV equation can be uniquely
  continuously extended in $H^{-1}(\T) $.  Therefore, on the torus,  KdV  is $ C^0 $-well-posed in $ H^{-1} $ if one takes as
   uniqueness class, the class of strong  limit in $C([0,T];H^{-1}(\T)) $ of smooth solutions.

   In \cite{MV1} the authors completed the result of \cite{MR2} in the real line case by
    proving that the KdV-Burgers equation
 is analytically well-posed in $ H^{-1}(\R)$  and $ C^0 $-ill-posed in
  $H^s(\R) $ for $ s<-1 $ in the sense that the flow-map defined on $ H^{-1}(\R)$
   is not continuous for the topology inducted by $ H^s $, $s<-1 $, with values even in
    ${\cal D}' (\R)$.
    To reach the critical Sobolev space $ H^{-1} (\R)$  they adapted the
      refinement of Bourgain's spaces that appeared in \cite{tataru} and
      \cite{Tao} to the framework developed in \cite{MR2}. The proof of the main bilinear estimate used in a crucial way the Kato smoothing effect that does not hold on the torus. Our aim here is to give the new ingredients that enable to overcome this lack of smoothing effects. The main idea is to weaken the space regularity of the Bourgain's spaces
         in a suitable space-time frequencies region. Note that  our resolution space will still be embedded in
        $ C([0,T];H^{-1}(\T)) $ and that, to get  the  $ L^\infty([0,T];H^{-1}(\T)) $-estimate  in this region,  we use an  idea that appeared in \cite{Bo3}. Finally, once the well-posedness result is proved, the proof of the ill-posedness result follows exactly the same lines as in \cite{MV1}. It  is due to a high to low frequency  cascade phenomena that was first observed in  \cite{BT} for a quadratic Schr\"odinger equation.

      In view of the result of Kappeler and Topalov for KdV  it thus appears that, at least on the torus, even if the dissipation part of the KdV-Burgers equation\footnotemark[1]
 allows to lower the $ C^\infty $ critical index with respect to the KdV equation,
       it does not  permit to improve the
      $ C^0$ critical index .
\footnotetext[1]{It is important to notice that
 the dissipative term $ -u_{xx} $ is of lower order than the dispersive one $ u_{xxx}$.}

  Our results can be summarized as follows:
\begin{theorem}\label{wellposed}
 The Cauchy problem associated to  (\ref{KdVB}) is  locally analytically  well-posed in $ H^{-1}(\T) $.
Moreover,  at every point $ u_0 \in  H^{-1}(\T) $ there exist $ T=T(u_0)>0 $ and $R=R(u_0)>0 $ such that  the solution-map $ u_0\mapsto u  $ is analytic from the ball centered at $ u_0 $ with radius $R$ of $ H^{-1}(\T) $ into $ C([0,T];H^{-1}(\T)) $. Finally, the solution  $ u$ can be extended for all positive
  times  and belongs to $
 C(\R_+^*;H^\infty(\T)) $.
\end{theorem}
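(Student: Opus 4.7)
The plan is to adapt to the periodic setting the Bourgain-space scheme of \cite{MV1}, replacing the arguments based on the Kato smoothing on $ \R $ (unavailable on $ \T $) by a weakened resolution space combined with an idea from \cite{Bo3} for the time-uniform $ H^{-1} $ control. The solution will be built as the unique fixed point of the Duhamel formulation
\[
u(t)=W(t)u_0-\frac{1}{2}\int_0^t W(t-t')\partial_x(u^2)(t')\,dt',
\]
where $ W(t) $ is the semigroup associated to the linear part of \re{KdVB}, in a carefully designed resolution space $ X\hookrightarrow C([0,T];H^{-1}(\T)) $.

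First, I would introduce $ X $. Starting from the standard Bourgain space $ X^{-1,1/2} $ adapted to the linear symbol $ i(\tau-\xi^3)+\xi^2 $, I would lower the spatial weight and simultaneously raise the modulation weight on the subset of space-time frequencies where the bilinear estimate breaks down on the torus. The calibration has to preserve the continuous embedding into $ C([0,T];H^{-1}(\T)) $: on the modified region, the parabolic gain of one derivative in $ L^2_{t,x} $ produced by the dissipation $ -u_{xx} $, used in the spirit of \cite{Bo3}, compensates exactly for the weakened spatial weight and still yields the time-uniform $ H^{-1} $ bound.

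The heart of the argument is then the bilinear estimate
\[
\Bigl\|\int_0^t W(t-t')\partial_x(uv)(t')\,dt'\Bigr\|_X \lesssim \|u\|_X\|v\|_X,
\]
established by a Littlewood-Paley decomposition and case analysis on the frequencies $ \xi_1,\xi_2,\xi_0=\xi_1+\xi_2 $ and the modulations $ \sigma_j $, using the resonance identity $ |\sigma_0-\sigma_1-\sigma_2|=3|\xi_0\xi_1\xi_2| $. All cases except the high-high-to-low interaction with simultaneously small modulations can be handled as in \cite{MR2} and \cite{MV1}; that remaining resonant configuration, which on $ \R $ was closed by Kato smoothing, is absorbed here precisely because the output frequency falls in the weakened region of $ X $. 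A contraction argument in a ball of $ X $ whose radius depends only on $ \|u_0\|_{H^{-1}} $ then yields a unique local solution, and since the Duhamel nonlinearity is polynomial of degree two in $ u $, the flow-map is automatically analytic.

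Global existence and the $ C(\R_+^*;H^\infty(\T)) $-regularization follow from the parabolic smoothing of $ W(t) $, whose Fourier multiplier $ e^{-t\xi^2+it\xi^3} $ sends $ H^{-1} $ into $ H^\infty $ instantly for $ t>0 $: the standard energy identity $ \frac{d}{dt}\|u\|_{L^2}^2=-2\|u_x\|_{L^2}^2 $ gives a uniform $ L^2 $ bound after an arbitrarily small positive time and permits the iteration of the local theory indefinitely. The main obstacle in the whole scheme is the bilinear estimate: the weakened region must be chosen sharply enough to absorb the problematic interactions yet remain compatible with the $ C([0,T];H^{-1}) $ embedding, and these two requirements pull in opposite directions.
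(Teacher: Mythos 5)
Your overall architecture (weakened Bourgain space on the bad space--time frequency region, an extra ingredient from \cite{Bo3} to recover the uniform-in-time $H^{-1}$ control, Littlewood--Paley case analysis on the resonance identity, quadratic Duhamel map hence analytic flow) matches the paper's. But there is a genuine gap at the step ``a contraction argument in a ball of $X$ whose radius depends only on $\|u_0\|_{H^{-1}}$ then yields a unique local solution.'' The bilinear estimate you can hope to prove here is $\|\partial_x(uv)\|_{\NN^{-1}_\eps}\lesssim\|u\|_{\S^{-1}_\eps}\|v\|_{\S^{-1}_\eps}$ with an absolute constant and \emph{no} gain from restricting the time interval: $H^{-1}(\T)$ is critical for KdV--Burgers, so no factor $T^\nu$ is available, and the usual rescue by dilation fails because rescaling shrinks the $H^{-1}$ norm of the datum only at the price of shrinking the coefficient of the dissipative term $-u_{xx}$ by exactly the compensating amount. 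Consequently the fixed-point map is contractive only on a ball of small \emph{absolute} radius, and $\eta(t)W(t)u_0$ does not fit into that ball for large $u_0$. The paper closes this gap with an extra device you do not have: it replaces the norm on the resolution space by $\|u\|_{\ZZ_\beta}=\inf\{\|u_1\|_{\widetilde{\S^{-1}_\eps}}+\beta^{-1}\|u_2\|_{\widetilde{\S^0_\eps}}\}$, splits the datum into high frequencies (small in $H^{-1}$) and low frequencies (bounded in $L^2$, hence cheap in the $\beta$-weighted $\S^0_\eps$ component), and proves a \emph{separate} bilinear estimate with a $T^\nu$ gain valid when one factor has the extra regularity $\S^0_\eps$ (via the Ginibre--Tsutsumi--Velo time-localization lemma). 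Without something of this kind your argument only proves local well-posedness for small data.

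Two secondary points. First, your claim that in the weakened region the parabolic gain ``compensates exactly \ldots and still yields the time-uniform $H^{-1}$ bound'' is optimistic: the weakened space $X^{-1,\frac12,1}_\eps$ is genuinely \emph{not} embedded in $L^\infty_tH^{-1}$, which is why the paper must intersect it with a dyadic $L^\infty_tH^{-1}$ space and prove an additional estimate (through the auxiliary norm $Z^{-1,-\frac12}$ on the nonlinearity) to propagate that bound; this is an extra obligation, not a consequence of the calibration. Second, for the global statement, the instantaneous smoothing of the linear semigroup does not by itself smooth the nonlinear solution; the cleaner route, which the paper takes, is to note that the resolution space embeds in $L^2_{t,x}$ locally, so $u(t_0)\in L^2(\T)$ for some arbitrarily small $t_0>0$, and then to invoke the known global $L^2(\T)$ theory with $C(\R_+^*;H^\infty)$ regularity from \cite{MR2}; your $L^2$ energy identity then indeed supplies the a priori bound.
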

Now that we have established analytic well-posedness, proceeding exactly as in \cite{MV1} by taking as sequence of initial data
  $$
  \phi_N(x)=N \cos(Nx) \; ,
  $$
  we get the following ill-posedness result.
\begin{theorem}\label{illposed}
The Cauchy problem associated to  (\ref{KdVB}) is ill-posed in $ H^s(\T) $ for $s <-1 $ in the following sense: there exists $ T>0 $ such that for any $ 0<t<T $,  the flow-map $ u_0\mapsto u(t) $ constructed in Theorem \ref{wellposed} is discontinuous at the origin from $ H^{-1}(\T) $  endowed with the topology inducted by $ H^s(\T) $ into ${\mathcal D}'(\T) $.
\end{theorem}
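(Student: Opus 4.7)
My proof plan for Theorem \ref{illposed} is to implement the high-to-low frequency cascade of Bejenaru--Tao \cite{BT}, in the form it takes for KdV--Burgers on the line in \cite{MV1}, and transfer it to the torus. As suggested by the excerpt, I take $\phi_N(x)=N\cos(Nx)$. An elementary Fourier-series computation gives $\|\phi_N\|_{H^s(\T)}^2\sim N^{2(1+s)}\to 0$ as $N\to\infty$ for every $s<-1$, while $\|\phi_N\|_{H^{-1}(\T)}$ stays of order one; in particular $\phi_N\to 0$ in $H^s(\T)$. If the flow-map $u_0\mapsto u(t)$ constructed in Theorem \ref{wellposed} were continuous at the origin from the $H^s$-topology into $\mathcal{D}'(\T)$, then necessarily $u(t;\lambda\phi_N)\to 0$ in $\mathcal{D}'(\T)$ for any fixed small $\lambda>0$; this will be the target contradiction.

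The analyticity at $u_0=0$ provided by Theorem \ref{wellposed} furnishes $R>0$ and $T_0>0$ such that on the $H^{-1}$-ball of radius $R$ the solution is the sum of an absolutely convergent series of multilinear continuous operators in $u_0$, valued in $C([0,T_0];H^{-1}(\T))$. Fixing $\lambda>0$ small enough that $\|\lambda\phi_N\|_{H^{-1}}<R$ uniformly in $N$, I write
\[
u(t;\lambda\phi_N)=\sum_{k\ge 1}\lambda^k\, u_N^{(k)}(t),\qquad 0<t<T_0,
\]
where $u_N^{(k)}(t)$ is the $k$-th Picard iterate obtained from iterated Duhamel integrals against the linear semigroup $S(t)$ associated with $\partial_t+\partial_x^3-\partial_x^2$. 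The first two iterates admit closed forms: $u_N^{(1)}(t,x)=Ne^{-N^2 t}\cos(Nx+N^3 t)$ and
\[
u_N^{(2)}(t)=-\tfrac12\int_0^t S(t-s)\,\partial_x\bigl(u_N^{(1)}(s)\bigr)^2\,ds,
\]
supported in the Fourier modes $\{\pm 2N\}$ (the zero mode is killed by $\partial_x$).

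The two points to verify are: (i) there exist $T\in(0,T_0)$ and a fixed smooth test function $\varphi\in C^\infty(\T)$ such that $\langle u_N^{(2)}(t),\varphi\rangle$ does \emph{not} tend to $0$ as $N\to\infty$ for every $t\in(0,T)$; and (ii) the tail $\sum_{k\ge 3}\lambda^k u_N^{(k)}(t)$ is dominated in $H^{-1}(\T)$ by $C\lambda^3$ uniformly in $N$ and $t\in(0,T)$, thanks to the radius of analyticity given by Theorem \ref{wellposed}. Granted (i) and (ii), the contribution $\lambda^2 u_N^{(2)}(t)$ cannot be matched by the tail when $\lambda$ is small, so $u(t;\lambda\phi_N)\not\to 0$ in $\mathcal{D}'(\T)$, giving the discontinuity. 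I expect the main obstacle to be (i): the naive bound on $u_N^{(2)}(t)$ produces a parabolic smoothing factor $e^{-(2N)^2 t}$ that would crush this term for fixed $t>0$, so the argument must isolate a resonant part of the Duhamel integrand coming from the dominance of the dispersive phase $ik^3$ over the dissipative phase $k^2$ at $k=\pm 2N$ that survives after testing against $\varphi$. This is precisely the computation carried out in \cite{MV1}, and since it only uses the symbol $ik^3-k^2$ evaluated at individual nonzero integer frequencies, it transfers directly from $\R$ to $\T$ by replacing Fourier integrals by Fourier series.
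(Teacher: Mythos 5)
Your overall architecture --- expanding the flow around the origin in a power series furnished by the analyticity in Theorem \ref{wellposed}, isolating the second Picard iterate, bounding the tail by $C\lambda^3$ in $H^{-1}$ uniformly in $N$, and concluding by pairing against a test function --- is exactly the scheme the paper imports from \cite{MV1}. The gap is in your point (i), and it is not a technical obstacle that a finer treatment of a ``resonant part'' can remove: with $\phi_N=N\cos(Nx)$ on the torus, \emph{no} choice of $\varphi$ can make $\langle u_N^{(2)}(t),\varphi\rangle$ fail to vanish. As you yourself note, $(S(s)\phi_N)^2$ has spatial Fourier support in $\{0,\pm 2N\}$ and $\partial_x$ kills the zero mode, so $u_N^{(2)}(t)$ is a trigonometric polynomial with frequencies $\pm 2N$ whose coefficients are at worst of polynomial size in $N$; hence $|\langle u_N^{(2)}(t),\varphi\rangle|\le 2\sup_k|\widehat{u_N^{(2)}}(t,k)|\,|\widehat{\varphi}(2N)|\to 0$ simply because $\widehat{\varphi}$ decays faster than any polynomial (an explicit computation even gives $|\widehat{u_N^{(2)}}(t,\pm 2N)|\lesssim e^{-2N^2t}$). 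Worse, the same frequency arithmetic propagates through every iterate: each $u_N^{(k)}$, hence $u(t;\lambda\phi_N)$ itself, has spatial Fourier support in $N\Z\setminus\{0\}$ (the mean is conserved and vanishes initially), so $|\langle u(t;\lambda\phi_N),\varphi\rangle|\le \|u\|_{L^\infty_tH^{-1}}\|P_{\gtrsim N}\varphi\|_{H^{1}}\to 0$ and the single-mode sequence exhibits no discontinuity at all. This is precisely where your claim that the computation of \cite{MV1} ``transfers directly from $\R$ to $\T$ by replacing Fourier integrals by Fourier series'' breaks down: on the line the data is a frequency \emph{band} of width $O(1)$ near $\pm N$, whose self-interaction covers a whole neighbourhood of the origin, while a single integer mode self-interacts only onto $0$ and $\pm 2N$.

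The missing idea is that the high-to-low cascade must land on a \emph{nonzero bounded} frequency, which on $\T$ forces the data to contain two distinct high modes differing by $O(1)$, e.g.\ $\phi_N=N\cos(Nx)+N\cos((N+1)x)$ (still bounded in $H^{-1}$ and tending to $0$ in $H^s$ for every $s<-1$). The interaction $k_1=\pm(N+1)$, $k_2=\mp N$ then produces the output $k=\pm 1$, where the resonance function $k^3-k_1^3-k_2^3=3kk_1k_2\sim\mp3N^2$ is comparable to the dissipation $k_1^2+k_2^2\sim 2N^2$: the Duhamel integral $\int_0^t e^{sN(N+1)(\pm3i-2)}\,ds\to \bigl(N(N+1)(2\mp3i)\bigr)^{-1}$ exactly cancels the factor $N^2$ coming from the amplitudes, and $\widehat{u_N^{(2)}}(t,\pm1)$ converges to a nonzero constant for each fixed $t>0$ (note it is the dissipation that makes this limit exist and be nonzero). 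With this corrected sequence your steps (i) and (ii) go through verbatim and the contradiction closes as you describe; the rest of your plan then coincides with the argument the paper refers to.
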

\noindent
{\bf Acknowledgements:}   L.M.  was partially supported by the ANR project
 "Equa-Disp".

 \section{Resolution space}\label{sec-space}
In this section we introduce a few notation and we define our functional framework.

For $A,B>0$, $A\lesssim B$ means that there exists $c>0$ such that $A\leq cB$. When $c$ is a small constant we use $A\ll B$. We write $A\sim B$ to denote the statement that $A\lesssim B\lesssim A$.
For $u=u(t,x)\in\S'(\R\times\T)$, we denote by $\widehat{u}$ (or $\F_xu)$ its Fourier transform in space, and $\widetilde{u}$ (or $\F u$) the space-time Fourier transform of $u$. We consider the usual Lebesgue spaces $L^p$, $L^p_xL^q_t$ and abbreviate $L^p_xL^p_t$ as $L^p$. Let us define the Japanese bracket $\cro{x}=(1+|x|^2)^{1/2}$ so that the standard non-homogeneous Sobolev spaces are endowed with the norm $\|f\|_{H^s}=\|\cro{\nabla}^sf\|_{L^2}$.

We use a Littlewood-Paley analysis. Let $\eta\in C^\infty_0(\R)$ be such that $\eta\geq 0$, $\supp \eta\subset [-2,2]$, $\eta\equiv 1$ on $[-1,1]$. We define next $\varphi(k)=\eta(k)-\eta(2k)$.
Any summations over capitalized variables such as $N,L$ are presumed to be dyadic, i.e. these variables range over numbers of the form $2^\ell$, $\ell\in \N\cup \{-1\}$. We set $ \varphi_{\frac{1}{2}}\equiv \eta $ and for $ N\ge 1 $, $\varphi_N(k)=\varphi(k/N)$ and define the operator
$P_N$ by $\F(P_Nu)=\varphi_N \widehat{u}$. We introduce $\psi_L(\tau,k)=\varphi_L(\tau-k^3)$ and for any $u\in\S'(\R\times \T)$,
$$\F_x(P_Nu(t))(k)=\varphi_N(k)\hat{u}(t,k),\quad \F(Q_Lu)(\tau,k)=\psi_L(\tau,k)\tilde{u}(\tau,k).$$
Roughly speaking, the operator $ P_{1/2} $ and $Q_{1/2} $  localize respectively in the ball  $\{|k|\lesssim 1\}$ and $ \{|\tau-k^3|\lesssim 1\}$
 whereas for $  N\ge  1$, the operator $P_N$ localizes in the annulus $\{|k|\sim N\}$ and $Q_N$ localizes in the region $\{|\tau-k^3|\sim N\}$.

Furthermore we define more general projection $P_{\lesssim N}=\sum_{N_1\lesssim N}P_{N_1}$, $Q_{\gg L}=\sum_{L_1\gg L}Q_{L_1}$ etc.

Let $e^{-t\partial_{xxx}}$ be the propagator associated to the Airy equation and define the two parameters linear operator $W$ by
\begin{equation}
W(t,t')\phi=\sum_{k\in \Z} \exp(itk^3-|t'|k^2)\hat{\phi}(k)\,  e^{ikx}  ,\quad t\in\R.\label{trtr}
\end{equation}
The operator $W : t\mapsto W(t,t)  $ is clearly an extension on $ \R $ of the linear semi-group $ S(\cdot) $ associated with \re{KdVB} that is given by
\begin{equation}
S(t)\phi=\sum_{k\in\Z} \exp(itk^3-t k^2)\hat{\phi}(k) \,  e^{ikx} ,\quad t\in\R_+.\label{trtr2}
\end{equation}
We will mainly work on the integral formulation of (\ref{KdVB}):
\begin{equation}\label{duhamel}
u(t) = S(t)u_0-\frac 12\int_0^tS(t-t')\partial_xu^2(t')dt',\quad t\in\R_+.
\end{equation}
Actually, to prove the local existence result, we will follow the strategy of \cite{MV1} and apply a fixed point argument to the following extension
 of (\ref{duhamel}):
\begin{eqnarray}\label{eq-int}u(t)& =&\eta(t)\Bigl[W(t)u_0-\frac 12 \chi_{\R_+}(t)\int_0^tW(t-t',t-t')\partial_xu^2(t')dt'
\nonumber \\
&&\hspace*{20mm}-\frac 12 \chi_{\R_-}(t)\int_0^tW(t-t',t+t')\partial_xu^2(t')dt'\Bigr] .
\label{eq-int}\end{eqnarray}
It is clear that if $u$ solves (\ref{eq-int}) then $u$ is a solution of (\ref{duhamel}) on $[0, T]$, $T<1$.

In \cite{MV1}, adapting some ideas of \cite{tataru} and \cite{Tao} to the framework developed in \cite{MR2},  the authors performed the iteration
 process in the  sum space $X^{-1,\frac 12,1}+Y^{-1,\frac 12} $ where
$$\|u\|_{X^{s,b,1}}=\Big(\sum_N\Big[\sum_L \cro{N}^{s}\cro{L+N^2}^{b}\|P_NQ_Lu\|_{L^2_{xt}}\Big]^{2}\Big)^{1/2}.$$
and
$$
\|u\|_{Y^{s,b}} =\Big(\sum_N [\cro{N}^s\|\F^{-1}[(i(\tau-k^3)+k^2+1)^{b+1/2}\varphi_N \widetilde{u}]\|_{L^1_tL^2_x}]^2\Big)^{1/2},
$$
so that
$$\|u\|_{Y^{-1,\frac 12}} \sim \Big(\sum_N[\cro{N}^{-1}\|(\partial_t+\partial_{xxx}-\partial_{xx}+I)P_Nu\|_{L^1_tL^2_x}]^2\Big)^{1/2}.$$
As explained in the introduction, due to the lack of the Kato smoothing effect on the torus,  we will be able to control none of the two above norms
 in the region "$ \sigma $-dominant". The idea is then to weaken the required
$ x$ -regularity   on the  $ X^{s,b} $ component of our resolution  space in this region.
For $\eps>0$ small enough, we thus introduce the function space
 $  X^{s,b, q}_\varepsilon  $ endowed with the norm
\begin{align}
\|u\|_{X^{s,b, 1}_\varepsilon } &= \Big(\sum_N\Big[\sum_{L\le N^3}  \cro{N}^{s}\cro{L+N^2}^{b}\|P_NQ_Lu\|_{L^2_{xt}}\Big]^{2}\Big)^{1/2}\nonumber \\
& \quad +  \Big(\sum_N\Big[\sum_{L> N^3}  \cro{N}^{s-\varepsilon}\cro{L+N^2}^{b}\|P_NQ_Lu\|_{L^2_{xt}}\Big]^{2}\Big)^{1/2}.\label{newnorm}
\end{align}
However,  $ X^{s,b, 1}_\varepsilon$ is  not embedded  anymore in  $ L^\infty(\R;H^{-1}(\T))$. For this reason
  we will take   its intersection with the function space $ \widetilde{L^\infty_{t }H^{-1}}$,   that is a  dyadic version    of
   $ L^\infty(\R;H^{-1}(\T))$,  equipped with the norm
$$\|u\|_{ \widetilde{L^\infty_{t }H^{-1}}} = \Big(\sum_N[\cro{N}^{-1}\|P_Nu\|_{L^\infty_tL^2_x}]^2\Big)^{1/2}.$$

 Finally, we also need to define  the space $ Z^{s,-\frac 12} $ equipped with the norm
  $$
 \|u\|_{Z^{s,-\frac 12}} =\Big(\sum_N [\cro{N}^s\| \varphi_N(k)\cro{i(\tau-k^3)+k^2}^{-1} \widetilde{u} \|_{L^2_k L^1_\tau}]^2\Big)^{1/2}.
 $$
We are now in position to form our resolution space $\widetilde{\S^{s}_\varepsilon}=(X^{s,\frac 12,1}_\varepsilon\cap
  \widetilde{L^\infty_{t }H^{-1}})+Y^{s,\frac 12}$ and the "nonlinear space" $\NN^{s}_\varepsilon=(X^{s,-\frac 12,1}_\varepsilon\cap Z^{s,-\frac 12})+Y^{s,-\frac 12}$   where the nonlinear term $\partial_xu^2 $ will take place.
   Actually  we will estimate $\|\partial_xu^2 \|_{\NN^{s}_\varepsilon}$ in term of   $\|u\|_{\S^{s}_\varepsilon}
    $ where   $ \S^{s}_\varepsilon=X^{s,\frac 12,1}_\varepsilon+Y^{s,\frac 12} $. Obviously
 $\|u\|_{\S^{s}_\varepsilon}\le \|u\|_{\widetilde{\S^{s}_\varepsilon}}
 $ and the first of these norms   has the advantage to only see the size of the modulus of the space-time Fourier transform of the function. This will be useful when dealing with the dual form of the main bilinear estimate.

Note that we endow these sum spaces with the usual norms:
$$\|u\|_{X+Y}=\inf\{\|u_1\|_X+\|u_2\|_Y: u_1\in X, u_2\in Y, u=u_1+u_2\}.$$

In the rest of this section, we study some basic properties of  the function space $\S^{-1}_\eps$.

\begin{lemma}\label{lem-Xinfty}
For any $\phi\in L^2$,
$$\Big(\sum_L[L^{1/2}\|Q_L(e^{-t\partial_{xxx}}\phi)\|_{L^2}]^2\Big)^{1/2}\lesssim \|\phi\|_{L^2}.$$
\end{lemma}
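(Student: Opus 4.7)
The plan is a direct Fourier-side computation. To make the left-hand side well-defined, I would first localize in time: multiply $e^{-t\partial_{xxx}}\phi$ by a fixed cut-off $\eta\in C_c^\infty(\R)$ (which is the form in which the lemma will actually be used, via the integral equation \re{eq-int}). Its space-time Fourier transform factorizes as
$$\F\bigl(\eta(t)e^{-t\partial_{xxx}}\phi\bigr)(\tau,k) = \hat\eta(\tau - k^3)\,\hat\phi(k),$$
so that Plancherel in $\tau$ together with the change of variable $\mu = \tau - k^3$ yield
$$\|Q_L(\eta(t)e^{-t\partial_{xxx}}\phi)\|_{L^2_{tx}}^2 = \|\phi\|_{L^2_x}^2\int_\R |\varphi_L(\mu)|^2 |\hat\eta(\mu)|^2\,d\mu.$$

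Next, I would multiply by $L$ and sum dyadically. The key pointwise bound is
$$\sum_L L\,|\varphi_L(\mu)|^2 \lesssim \cro{\mu},$$
which holds because each $\varphi_L$ is bounded by $1$ and supported in the annulus $|\mu|\sim L$, so for any fixed $\mu$ only $O(1)$ terms are nonzero and each of them contributes at most a multiple of $L\sim \cro{\mu}$. Plugging this estimate into the previous identity gives
$$\sum_L L\,\|Q_L(\eta(t)e^{-t\partial_{xxx}}\phi)\|_{L^2_{tx}}^2 \lesssim \|\phi\|_{L^2_x}^2\,\|\cro{\cdot}^{1/2}\hat\eta\|_{L^2_\mu}^2 = C_\eta\,\|\phi\|_{L^2_x}^2,$$
with $C_\eta = \|\eta\|_{H^{1/2}(\R)}^2$. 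Taking square roots closes the argument.

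There is no serious obstacle here: the estimate is the classical $\ell^2$ variant of the Bourgain-space embedding $\|\eta(t)e^{-t\partial_{xxx}}\phi\|_{X^{0,1/2}}\lesssim\|\phi\|_{L^2}$, specialized to the dyadic decomposition in the modulation variable $\tau-k^3$. The only mild interpretive point is the need for a smooth time localization so that the $L^2_{tx}$ norms appearing on the left are finite; this is harmless because the semi-group $e^{-t\partial_{xxx}}$ enters \re{eq-int} precisely multiplied by such an $\eta(t)$, and the constant $C_\eta$ picked up in $H^{1/2}(\R)$ is fixed.
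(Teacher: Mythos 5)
Your argument is correct and its core is identical to the paper's: both proofs factorize the space--time Fourier transform of the time-localized free evolution as $\hat\eta(\tau-k^3)\hat\phi(k)$, apply Plancherel with the change of variable $\mu=\tau-k^3$, and reduce the dyadic sum to a one-dimensional integral against a modulation weight. The one genuine difference is the treatment of the time cut-off. The paper works with the rescaled cut-offs $\eta_T(t)=\eta(t/T)$ and lets $T\to\infty$: after the second change of variable $T\mu\to\sigma$ the constant that appears is the \emph{homogeneous} quantity $\||\cdot|^{1/2}\hat\eta\|_{L^2}$, which is invariant under this rescaling, so the bound is uniform in $T$ and the cut-off is removed in the limit. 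You instead fix $\eta$ once and for all and pay the inhomogeneous constant $\|\eta\|_{H^{1/2}}$; this is perfectly fine for the localized statement, but it does not pass to the limit $T\to\infty$ (the $L^2$ part of $\|\eta_T\|_{H^{1/2}}$ grows like $T^{1/2}$), and the lemma is later invoked, in the proof of \re{est-L2l2}, for $e^{-t\partial_{xxx}}u(0)$ and for the free evolution of $\int_{-\infty}^0 e^{t'\partial_{xxx}}v(t')\,dt'$ with no visible cut-off. So if you keep your formulation you should either state the lemma with the cut-off and verify that every subsequent application factors through such a localization, or sharpen your pointwise bound to $\sum_{L\ge 1} L\,|\varphi_L(\mu)|^2\lesssim |\mu|$ so that the scale-invariant seminorm appears, which is in substance what the paper's $T\to\infty$ argument achieves. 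This is a difference of bookkeeping rather than of ideas.
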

\begin{proof}
From Plancherel theorem, we have
$$\Big(\sum_L[L^{1/2}\|Q_L(e^{-t\partial_{xxx}}\phi)\|_{L^2}]^2\Big)^{1/2}\sim \||\tau-k^3|^{1/2}\F(e^{-t\partial_{xxx}}\phi)\|_{L^2}.$$
Moreover if we set $\eta_T(t)=\eta(t/T)$ for $T>0$, then
$$\F(\eta_T(t)e^{-t\partial_{xxx}}\phi)(\tau,k) = \widehat{\eta_T}(\tau-k^3)\widehat{\phi}(k).$$
Thus we obtain with the changes of variables $\tau-k^3\to \tau '$ and $T\tau'\to\sigma$ that
$$\||\tau-k^3|^{1/2}\F(\eta_T(t)e^{-t\partial_{xxx}}\phi)\|_{L^2}\lesssim \|\phi\|_{L^2}\||\tau'|^{1/2}T\widehat{\eta}(T\tau')\|_{L^2_{\tau'}}\lesssim \|\phi\|_{L^2}.$$
Taking the limit $T\to\infty$, this completes the proof.
\end{proof}

\begin{lemma}\label{lem-Sbounds}
\begin{enumerate}
\item For  any $\eps\ge 0 $ and all  $ u\in \widetilde{\S^{-1}_\eps} $ , we have
\begin{equation}\label{est-L2S-11}
\| u\|_{L^\infty_{t }H^{-1}(\T)}\lesssim \| u\|_{\widetilde{\S^{-1}_\eps} }\quad \mbox{ and  }\quad
\Bigr(\sum_{N}\|P_{N} Q_{\le N^3 }u\|_{L^\infty_t  H^{-1}(\T)}^2 \Bigl)^{1/2}\lesssim \|u\|_{\S^{-1}_\eps} .\end{equation}

\item For any $ 0\le \eps\le 1/2 $ and all  $ u\in \S^{-1}_\eps $ , we have
\begin{equation}\label{est-L2S-1}\| u\|_{L^2_{xt}}\lesssim \| u\|_{\S^{-1}_\eps} .\end{equation}
\item For all $u\in Y^{0,1/2}$,
\begin{equation}\label{est-L2l2}\Big(\sum_L[L^{1/2}\|Q_Lu\|_{L^2}]^2\Big)^{1/2}\lesssim \|u\|_{Y^{0,\frac 12}}.\end{equation}
\end{enumerate}
\end{lemma}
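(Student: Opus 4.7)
The first inequality in \eqref{est-L2S-11} is a direct consequence of the definition of $\widetilde{\S^{-1}_\eps}$: the norm $\|\cdot\|_{\widetilde{L^\infty_t H^{-1}}}$ is a dyadic $\ell^2_N$ refinement of $L^\infty_t H^{-1}$, so Minkowski's inequality (to exchange $L^\infty_t$ with $\ell^2_N$) yields $\|u\|_{L^\infty_t H^{-1}}\le \|u\|_{\widetilde{L^\infty_t H^{-1}}}$; taking the infimum over admissible decompositions gives the claim.

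For the second inequality in \eqref{est-L2S-11}, I decompose $u=u_1+u_2$ with $u_1\in X^{-1,1/2,1}_\eps$ and $u_2\in Y^{-1,1/2}$. For the $u_1$-piece in the region $L\le N^3$, I use the Bernstein-type estimate $\|P_N Q_L v\|_{L^\infty_t L^2_x}\lesssim L^{1/2}\|P_N Q_L v\|_{L^2}$ (coming from the localization $|\tau-k^3|\sim L$ and Cauchy--Schwarz in $\tau$); together with the trivial bound $L^{1/2}\le \cro{L+N^2}^{1/2}$ and the $\ell^1_L$ structure of $X^{-1,1/2,1}_\eps$, summing in $L\le N^3$ and then in $\ell^2_N$ gives the estimate. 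For $u_2$, I invert the multiplier: from $\widetilde{g_N}=(i(\tau-k^3)+k^2+1)\varphi_N\widetilde{u_2}$ one recovers the Duhamel representation $P_N u_2(t)=\int_{-\infty}^t e^{-(t-s)\mathcal L}g_N(s)\,ds$ with $\mathcal L=\partial_{xxx}-\partial_{xx}+I$; since $e^{-s\mathcal L}$ is a contraction on $L^2$ with decay $e^{-s}$ for $s\ge 0$, Young yields $\|P_N u_2\|_{L^\infty_t L^2_x}\lesssim\|g_N\|_{L^1_t L^2_x}$, and dividing by $\cro{N}$ and applying $\ell^2_N$ returns the $Y^{-1,1/2}$-norm.

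The $L^2_{xt}$-estimate \eqref{est-L2S-1} follows the same splitting. For the $X^{-1,1/2,1}_\eps$-part, the weights pointwise dominate $1$: in the low region $L\le N^3$ we have $\cro{N}^{-1}\cro{L+N^2}^{1/2}\ge 1$ since $\cro{L+N^2}^{1/2}\ge\cro{N}$, while in the high region $L>N^3$ we have $\cro{N}^{-1-\eps}\cro{L+N^2}^{1/2}\gtrsim\cro{N}^{1/2-\eps}\ge 1$ as soon as $\eps\le 1/2$ (using $\cro{L+N^2}^{1/2}\ge\cro{N}^{3/2}$). Hence $\|P_NQ_L u_1\|_{L^2}$ is controlled by the corresponding $X$-summand, and Cauchy--Schwarz/Minkowski close the bound. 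For the $Y^{-1,1/2}$-part I write $\widehat{P_N u_2}(\cdot,k)=K_k\ast_t\hat{g_N}(\cdot,k)$ with the kernel $K_k(t)=H(t)e^{itk^3-(k^2+1)t}$; the key gain is $\|K_k\|_{L^2_t}\sim (k^2+1)^{-1/2}$, which via Young in $t$ and Plancherel/Minkowski in $x$ produces $\|P_N u_2\|_{L^2}\lesssim\cro{N}^{-1}\|g_N\|_{L^1_t L^2_x}$, again absorbable into $\|u_2\|_{Y^{-1,1/2}}$.

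For \eqref{est-L2l2} I use Plancherel together with the identity $\sum_L L\psi_L^2(\tau,k)\sim\cro{\tau-k^3}$ to rewrite the LHS in Fourier as (essentially) $\int\cro{\tau-k^3}|\widetilde u|^2\,d\tau dk$; substituting $\varphi_N\widetilde u=\widetilde{g_N}/(i(\tau-k^3)+k^2+1)$ reduces the matter to a weighted Fourier estimate on each $\widetilde{g_N}$, which I control by combining the pointwise bound $|\widetilde{g_N}(\tau,k)|\le\|\hat{g_N}(\cdot,k)\|_{L^1_t}$, the elementary AM--GM estimate $|\sigma|/(\sigma^2+(k^2+1)^2)\le(2(k^2+1))^{-1}$ on the symbol, and the Airy-propagator Lemma~\ref{lem-Xinfty} applied after decomposing the heat-Airy kernel into its oscillatory and dissipative factors. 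This last decomposition is where the work lies: extracting the $\cro{N}^{-1}$ gain from the dissipative part so that the $L^1_tL^2_x$-norm of $g_N$ (rather than $L^2_{tx}$) suffices is the main technical point, and it is the step I expect to demand the most care.
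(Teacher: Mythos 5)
Your treatment of parts (1) and (2) is essentially the paper's own argument: the $X$-components are handled by comparing the weights $\cro{N}^{-1}\cro{L+N^2}^{1/2}$ (resp.\ $\cro{N}^{-1-\eps}\cro{L+N^2}^{1/2}$ when $L>N^3$) with $1$, exactly as in the paper, and the $Y$-components by convolving in time with the causal kernel $e^{itk^3-(k^2+1)t}\chi_{\R^+}(t)$ and applying Young's inequality with its $L^\infty_t$ (resp.\ $L^2_t$) norm. One bookkeeping remark: the first inequality of \re{est-L2S-11} already requires the bound $\|P_Nu_2\|_{L^\infty_tL^2_x}\lesssim \|P_Nu_2\|_{Y^{0,\frac 12}}$ for the $Y$-piece of the decomposition of $u$, which your first paragraph omits and which you only supply in the second paragraph; with that rearrangement these two parts are fine.

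Part (3) is where your proposal has a genuine gap, and you have correctly located it but not filled it. After Plancherel and the substitution $\varphi_N\widetilde{u}=\widetilde{g_N}/(i(\tau-k^3)+k^2+1)$, you must bound $\int \cro{\tau-k^3}\,|i(\tau-k^3)+k^2+1|^{-2}|\widetilde{g_N}|^2$. Neither of the two elementary tools you list can close this: the pointwise bound $|\widetilde{g_N}(\tau,k)|\le \|\hat{g}_N(\cdot,k)\|_{L^1_t}$ leaves the $\tau$-integral $\int \cro{\sigma}(\sigma^2+\cro{N}^4)^{-1}\,d\sigma$, which diverges logarithmically, while the AM--GM bound $\cro{\sigma}|i\sigma+k^2+1|^{-2}\lesssim \cro{N}^{-2}$ would require control of $\|g_N\|_{L^2_{tx}}$, which the $Y^{0,\frac 12}$-norm does not provide. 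So all the content is hidden in the phrase ``decomposing the heat--Airy kernel into its oscillatory and dissipative factors,'' and that step cannot be run on the Fourier side as you describe. The paper's route is structurally different: it sets $v=(\partial_t+\partial_{xxx})u$ and first shows $\|P_Nv\|_{L^1_tL^2_x}\lesssim \|P_Nu\|_{Y^{0,\frac 12}}$, because the kernel $(k^2+1)e^{itk^3-(k^2+1)t}\chi_{\R^+}(t)$ has $L^1_t$-norm $O(1)$ --- this is where the dissipative gain is actually extracted, in physical time rather than on the symbol. It then represents $u$ by Duhamel for the \emph{pure Airy} flow, applies Lemma \ref{lem-Xinfty} to the free part, and treats the Duhamel term by Minkowski in $t'$ followed by a time-restriction argument for the sharp cutoff $\chi_{t>t'}$. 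Be aware that this last step sits exactly at an endpoint ($\eta(t)\chi_{t>t'}$ is at the border of $H^{1/2}_t$, so the square-summed-in-$L$ bound for the truncated free evolution is delicate); your instinct that this is ``the step that demands the most care'' is right, but as written your proposal contains no argument for it, and the specific combination of estimates you propose cannot produce one.
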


\begin{proof}
\begin{enumerate}
\item
First  it is fairly obvious that $ \widetilde{L^\infty_{t}H^{-1}_x} \hookrightarrow L^\infty_{t}H^{-1}_x $ and that, according to  the
  definition of $ X^{-1,\frac 12,1}_{\eps}$,
  $$
  \Bigr(\sum_{N}\|P_{N} Q_{\le N^3 }u\|_{L^\infty_t  H^{-1}(\T)}^2 \Bigl)^{1/2}\lesssim \|u\|_{X^{-1,\frac 12,1}_{\eps}}\; .
  $$
Second, for any dyadic $ N$,
\begin{align*}
\|P_{N }u\|_{L^\infty_{t}  H^{-1}} &= \Big\|\F_t^{-1}\Big(\frac{\cro{k}^{-1} }{i(\tau-k^3)+k^2+1}(i(\tau-k^3)+k^2+1)
\varphi_N\widetilde{u}\Big)\Big\|_{L^\infty_{t}L^2_{k}}\\
&\lesssim \Big\|\F_t^{-1}\Big(\frac{\cro{k}^{-1}\varphi_N(k)}{i(\tau-k^3)+k^2+1}\Big)\Big\|_{L^\infty_t L^\infty_{k}}\|P_Nu\|_{Y^{0,\frac 12}}\\
&\lesssim \| \cro{k}^{-1}\varphi_N(k)\|_{L^\infty_{k} } \|e^{-t\cro{N}^2}\chi_{\R^+}(t)\|_{L^\infty_t}\|P_Nu\|_{Y^{0,\frac 12}}\\
&\lesssim \cro{N}^{-1}\|P_Nu\|_{Y^{0,\frac 12}}\lesssim \|P_Nu\|_{Y^{-1,\frac 12}}.
\end{align*}
This completes the proof  of \re{est-L2S-11} after square summing in $ N $.
\item
 In the same way,  for any dyadic $ N $
\begin{eqnarray}
\|P_N u\|_{L^2} &\lesssim & \sum_{L\le N^3} \|P_NQ_L u\|_{L^2}  +\sum_{L>N^3}\|P_NQ_L u\|_{L^2} \nonumber \\
& \lesssim &  \sum_{L\le N^3} \|P_N Q_L u\|_{L^2} + \sum_{L>N^3}  L^{1/2} N^{-3/2} \|P_N Q_L u\|_{L^2}\nonumber\\
& \lesssim  & \|P_N u \|_{X^{-1,\frac 12,1}_{1/2} }\label{tf} \; .
\end{eqnarray}
On the other hand, applying  Young and H\"{o}lder's inequalities, we get
\begin{align*}
\|P_Nu\|_{L^2} &= \Big\|\F_t^{-1}\Big(\frac{1}{i(\tau-k^3)+k^2+1}(i(\tau-k^3)+k^2+1)\varphi_N\widetilde{u}\Big)\Big\|_{L^2_{tk}}\\
&\lesssim \Big\|\F_t^{-1}\Big(\frac{\varphi_N(k)}{i(\tau-k^3)+k^2+1}\Big)\Big\|_{L^2_t L^\infty_{k}}\|P_Nu\|_{Y^{0,\frac 12}}\\
&\lesssim \|e^{-t\cro{N}^2}\chi_{\R^+}(t)\|_{L^2_t}\|P_Nu\|_{Y^{0,\frac 12}}\\
&\lesssim \cro{N}^{-1}\|P_Nu\|_{Y^{0,\frac 12}}\lesssim \|P_Nu\|_{Y^{-1,\frac 12}}.
\end{align*}
This proves  \re{est-L2S-1}  after square summing in $ N $.\item
Setting  $v=(\partial_t+\partial_{xxx})u$, we see that $u$ can be rewritten as
$$u(t)=e^{-t\partial_{xxx}}u(0)+\int_0^te^{-(t-t')\partial_{xxx}}v(t')dt'.$$
By virtue of Lemma \ref{lem-Xinfty}, we have
$$\Big(\sum_L[L^{1/2}\|Q_Le^{-t\partial_{xxx}}u(0)\|_{L^2}]^2\Big)^{1/2}\lesssim \|u(0)\|_{L^2}\lesssim \|u\|_{L^\infty_tL^2_x}.$$
Moreover, we get as previously
$$\|u\|_{L^\infty_tL^2_x}\lesssim \Big\|\F_t^{-1}\Big(\frac{1}{i(\tau-k^3)+k^2+1}\Big)\Big\|_{L^\infty_{tk}}\|u\|_{Y^{0,\frac 12}}
\lesssim \|u\|_{Y^{0,\frac 12}}.$$
Now it remains to show that
\begin{equation}\label{est-vL1L2}\Big(\sum_L\Big[L^{1/2}\Big\|Q_L\int_0^te^{-(t-t')\partial_{xxx}}v(t')dt'\Big\|_{L^2}\Big]^2\Big)^{1/2}\lesssim \|v\|_{L^1_tL^2_x},\end{equation}
since the right-hand side is controlled by
\begin{align*}
\|P_Nv\|_{L^1_tL^2_x} &\lesssim \|(I-\partial_{xx})P_Nu\|_{L^1_tL^2_x}+\|P_Nu\|_{Y^{0,\frac 12}}\\
&\lesssim \Big(\Big\|\F_t^{-1}\Big(\frac{\varphi_N(\xi)(\xi^2+1)}{i(\tau-\xi^3)+\xi^2+1}\Big)\Big\|_{L^1_tL^\infty_\xi}+1 \Big)\|P_Nu\|_{Y^{0,\frac 12}}\\
&\lesssim \|P_Nu\|_{Y^{0,\frac 12}}.
\end{align*}
In order to prove \re{est-vL1L2}, we split the integral $\int_0^t = \int_{-\infty}^t-\int_{-\infty}^0$. By Lemma \ref{lem-Xinfty}, the contribution with integrand on $(-\infty,0)$ is bounded by
$$
\lesssim \Big\|\int_{-\infty}^0 e^{t'\partial_{xxx}}v(t')dt'\Big\|_{L^2_x}
\lesssim \|v\|_{L^1_tL^2_x}.
$$
For the last term, we reduce by Minkowski to show that
$$\Big(\sum_L[L^{1/2}\|Q_L(\chi_{t>t'}e^{-(t-t')\partial_{xxx}}v(t'))\|_{L^2_{tx}}]^2\Big)^{1/2}\lesssim \|v(t')\|_{L^2_x}.$$
This can be proved by a time-restriction argument. Indeed, for any $T>0$, we have
\begin{align*}
&\Big(\sum_L[L^{1/2}\|Q_L(\eta_T(t)\chi_{t>t'}e^{-(t-t')\partial_{xxx}}v(t'))\|_{L^2}]^2\Big)^{1/2}\\
&\quad \lesssim \||\tau|^{1/2} \widehat{v}(t')\F_t(\eta_T(t)\chi_{t>t'})(\tau)\|_{L^2}\\
&\quad\lesssim \|v(t')\|_{L^2}\||\tau|^{1/2}\F_t(\eta(t)\chi_{tT>t'})\|_{L^2}\\
&\quad\lesssim \|v(t')\|_{L^2}.
\end{align*}
We conclude by passing to the limit $T\to\infty$.
\end{enumerate}
\end{proof}

\section{Linear estimates}\label{sec-lin}
It is straightforward to check that  estimates on the linear operator $ W(t) $ and on the
 extension of the Duhamel term proven in \cite{MV1} on $ \R $  still hold on $ \T $.  We thus will concentrate ourselves
  on the $ X^{s,\frac 12,1}_\eps \cap  \widetilde{L^\infty_{t} H^{-1}} $ component.

\begin{proposition}
\label{linearL}
\begin{enumerate}
\item  For any $ \eps \ge 0 $ and all $\phi\in H^{-1}(\T)$, we have
\begin{equation}\label{est-lin}\|\eta(t) W(t)\phi\|_{ \widetilde{\S^{-1}_\varepsilon} }\lesssim \|\phi\|_{H^{-1}}.\end{equation}
\item  Let $\L : f\to \L f$ denote the linear operator
\begin{eqnarray}
\L f(t,x) &=& \eta(t)\Bigl( \chi_{\R^+}(t)\int_0^tW(t-t',t-t')f(t')dt'\nonumber \\
 & & + \chi_{\R^-}(t)\int_0^tW(t-t',t+t')f(t')dt'\Bigr) \label{eq-L} \; .
\end{eqnarray}
 If $f\in \NN^{-1}_\eps $ with $ \eps\ge 0$, then
\begin{equation}\label{est-linNhom}\| \L f \|_{\widetilde{\S^{-1}_\varepsilon}} \lesssim \|f\|_{\NN^{-1}_\varepsilon}.\end{equation}
\end{enumerate}
\end{proposition}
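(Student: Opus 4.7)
The plan is to follow the authors' cue and focus exclusively on the $X^{-1,\frac 12,1}_\eps\cap\widetilde{L^\infty_tH^{-1}}$ component of $\widetilde{\S^{-1}_\eps}$: the bounds on the $Y^{-1,\frac 12}$ pieces come from \cite{MV1} without change, since they depend only on Fourier-side symbol bounds insensitive to whether the spatial variable lives in $\R$ or in $\T$. For part (i) I would place $\eta(t)W(t)\phi$ directly inside $X^{-1,\frac 12,1}_\eps\cap\widetilde{L^\infty_tH^{-1}}$, while for part (ii) I would split $f=f_1+f_2$ with $f_1\in X^{-1,-\frac 12,1}_\eps\cap Z^{-1,-\frac 12}$ and $f_2\in Y^{-1,-\frac 12}$ according to the definition of $\NN^{-1}_\eps$, reduce the $f_2$-contribution to \cite{MV1}, and concentrate on $\L f_1$.

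For (i), formula \re{trtr} shows that the space-time Fourier transform of $\eta(t)W(t)\phi$ equals $\F_t(\eta(t)e^{-|t|k^2})(\tau-k^3)\widehat\phi(k)$. Since $t\mapsto \eta(t)e^{-|t|k^2}$ is compactly supported and has a derivative jump of size $O(k^2)$ at $t=0$, a direct computation (one integration by parts in $t$) yields the pointwise bound $|\F_t(\eta(t)e^{-|t|k^2})(\sigma)|\lesssim \min(k^{-2},k^2\sigma^{-2})$ modulo a Schwartz tail from $\widehat\eta$. After Plancherel this gives $\|P_NQ_L(\eta W\phi)\|_{L^2_{tx}}\lesssim \langle L+N^2\rangle^{-1/2}\|P_N\phi\|_{L^2_x}$. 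Multiplying by the $X^{-1,\frac 12,1}_\eps$-weight $\langle N\rangle^{-1}\langle L+N^2\rangle^{1/2}$ and summing dyadically in $L$ (the geometric tail for $L\gg N^2$ converges without logarithmic loss because the decay is quadratic in $\sigma$) produces $\langle N\rangle^{-1}\|P_N\phi\|_{L^2_x}$; square-summing in $N$ yields $\lesssim\|\phi\|_{H^{-1}}$. The $\widetilde{L^\infty_tH^{-1}}$-bound is then trivial from $\|W(t)P_N\phi\|_{L^2_x}\le\|P_N\phi\|_{L^2_x}$.

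For (ii), I would write $\L f_1$ using \re{trtr} and interchange the orders of integration. A straightforward Fourier computation (expanding $\widehat{f_1}(t',k)$ in $\tau$ and integrating the explicit Duhamel kernel $e^{i(t-t')k^3-(t-t')k^2}$) produces the identity
\begin{equation*}
\L f_1(t,x)\;=\;\frac{\eta(t)}{2\pi}\sum_k e^{ikx}\int \frac{e^{it\tau}\widetilde{f_1}(\tau,k)}{i(\tau-k^3)+k^2}\,d\tau\;-\;\eta(t)W(t)\psi(x)\;+\;\text{analogous term for }t<0,
\end{equation*}
where $\widehat\psi(k)=\int\widetilde{f_1}(\tau,k)/(i(\tau-k^3)+k^2)\,d\tau$. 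The boundary term $\eta(t)W(t)\psi$ is covered by part (i) because $\|\psi\|_{H^{-1}}\lesssim \|f_1\|_{Z^{-1,-\frac 12}}$ by definition of the $Z$-norm. For the main term, the factor $(i(\tau-k^3)+k^2)^{-1}$ gains $\langle L+N^2\rangle^{-1}$ on $Q_L$, converting the $X^{-1,-\frac 12,1}_\eps$-weight on $f_1$ into the $X^{-1,\frac 12,1}_\eps$-weight on $\L f_1$; the cutoff $\eta(t)$ enters as a convolution in $\tau$ with the Schwartz function $\widehat\eta$, which is harmless. For the $\widetilde{L^\infty_tH^{-1}}$-bound on the same main term, applying Young's inequality in the inverse Fourier representation yields
\begin{equation*}
\|P_N\L f_1\|_{L^\infty_tL^2_x}\;\lesssim\;\Bigl\|\frac{\varphi_N(k)\widetilde{f_1}(\tau,k)}{i(\tau-k^3)+k^2}\Bigr\|_{L^2_kL^1_\tau},
\end{equation*}
which is exactly the $P_N$-contribution to $\|f_1\|_{Z^{-1,-\frac 12}}$.

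The main obstacle is precisely this $\widetilde{L^\infty_tH^{-1}}$-estimate for $\L f_1$: at the critical regularity $s=-1$, passing through the $\ell^1$-in-$L$ summation of $X^{-1,\frac 12,1}_\eps$ alone would produce a logarithmic loss incompatible with the endpoint. The auxiliary space $Z^{-1,-\frac 12}$, with its $L^2_kL^1_\tau$ structure and resolvent weight $\langle i(\tau-k^3)+k^2\rangle^{-1}$, is introduced precisely to furnish this missing endpoint time-supremum bound cheaply, in the spirit of \cite{Bo3}. Once both the $X$- and $L^\infty_tH^{-1}$-bounds are in hand, assembling them and square-summing in $N$ yields \re{est-lin} and \re{est-linNhom}.
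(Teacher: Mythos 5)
Your overall architecture matches the paper's: delegate the $Y$-components and the unweighted $X^{s,\frac12,1}$ bounds to \cite{MV1}, write $\L f$ explicitly on the Fourier side as a resolvent term plus a boundary term $\eta(t)W(t)\psi$, absorb the boundary term via part (i) together with the $Z^{-1,-\frac 12}$ norm, and obtain the $\widetilde{L^\infty_{t }H^{-1}}$ bound from the $L^2_kL^1_\tau$ structure of $Z^{-1,-\frac 12}$ --- this last point is indeed the paper's estimate \re{est-lin5b}, up to the removable singularity of $(i\tau+k^2)^{-1}$ at $k=0$, which the paper treats by a Taylor expansion and which you do not address. Part (i) is correct (the paper simply quotes \cite{MV1} plus the embedding $X^{s,\frac 12,1}\hookrightarrow \widetilde{L^\infty_{t} H^{-1}}\cap X^{s,\frac 12,1}_\varepsilon$; your direct kernel computation reproduces that).

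There is, however, a genuine gap in your $X^{0,\frac 12,1}_\eps$ estimate for the main term, and it is precisely the step this proposition exists to handle. The time localization is not multiplication by $\eta(t)$ alone: after combining the $t>0$ and $t<0$ branches of $\L$, the resolvent term is multiplied by $\theta(t)=\eta(t)e^{(t-|t|)k^2}$, and $\widehat{\theta}$ is \emph{not} Schwartz --- because of the derivative jump of size $k^2$ at $t=0$, two integrations by parts only give $|\widehat{\theta}(\tau)|\lesssim \cro{k}^2|\tau|^{-2}$. Hence the convolution by $\widehat{\theta}$ leaks mass from input modulations $L_1\ge 2N^3$ down to output modulations $L\le N^3$ with merely polynomial decay. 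This matters because the weight of $X^{-1,\pm\frac 12,1}_\eps$ jumps from $\cro{N}^{-1-\eps}$ to $\cro{N}^{-1}$ across the threshold $L=N^3$: when the input modulation exceeds $N^3$ but the output modulation does not, your claim that the resolvent factor ``converts the weight'' loses a factor $N^{\eps}$. This is why the paper's estimate \re{est-lin2} carries $\|f\|_{Z^{0,-\frac 12}}$ on its right-hand side, and why the bulk of the proof is devoted to \re{est-lin5}: for $L\le N^3/4$ one uses $\|\chi_{|\tau|\ge N^3/2}\widehat{\theta}\|_{L^2}\lesssim N^{-5/2}$ and pays with the $Z^{0,-\frac 12}$ norm, while for $L\sim N^3$ a mean-value-theorem decomposition of $\varphi_L$ is needed and produces the extra term $\sum_{L=N^3/2}^{N^3}\langle L+N^2\rangle^{-1/2}\|P_NQ_Lf\|_{L^2}$. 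None of this appears in your proposal; the sentence declaring the time cutoff ``harmless'' is exactly where the argument breaks.
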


\begin{proof}
The first assertion is  a direct consequence of the corresponding  estimate in $ X^{s,\frac 12,1} $ proven in \cite{MV1} together with the continuous embedding $  X^{s,\frac 12,1}\hookrightarrow \widetilde{L^\infty_{t} H^{-1}}\cap X^{s,\frac 12,1}_\varepsilon $ for $ \varepsilon\ge 0$.

To prove the second assertion,  it clearly suffices to show the three  following inequalities
\begin{eqnarray}\label{est-lin3}\|\L f\|_{Y^{0,\frac 12}}&\lesssim &\|f\|_{Y^{0,-\frac 12}},\\
\label{est-lin2}\|\L f\|_{X^{0,\frac 12,1}_\varepsilon}&\lesssim& \|f\|_{X^{0,-\frac 12,1}_\varepsilon}+\|f\|_{Z^{0,-\frac 12}},\\
\label{est-lin4}\|\L f\|_{ \widetilde{L^\infty_{t} H^{-1}}}&\lesssim& \|f\|_{Z^{0,-\frac 12}}.\end{eqnarray}
Estimate \re{est-lin3} has been proved  in \cite{MV1}. To prove \re{est-lin2} we first note that according to \cite{MV1}, it holds
\begin{equation}\label{est-lin1}\|\L f\|_{X^{s,\frac 12,1}}\lesssim \|f\|_{X^{s,-\frac 12,1}}.\end{equation}
It is then not too hard to be convinced that \re{est-lin2} is a consequence of the following estimate:
\begin{multline}
\Bigl(\sum_{N\ge 4}\Bigl[\sum_{L\le N^3}\langle L+N^2\rangle ^{1/2}\|P_N Q_L(\L(Q_{\ge 2N^3}f))\|_{L^2}\Bigr]^2 \Bigr)^{1/2}\\
 \lesssim\|f\|_{Z^{0,-\frac 12}}+\Bigl(\sum_{N\ge 4}\Bigl[\sum_{L= N^3/2}^{N^3} \langle L+N^2\rangle^{-1/2} \|P_N Q_{L} f\|_{L^2}\Bigr]^2 \Bigr)^{1/2}.\label{est-lin5}
\end{multline}
 To prove \re{est-lin4} and \re{est-lin5}  we proceed as in \cite{MV1}. Using the $x$-Fourier expansion
 and setting $ w(t)=U(-t) f(t) $ it is easy to derive that
 $$\L f(t,x) = U(t)\left[\eta(t)\sum_{k\in \Z} e^{ix k }\int_{\R}
\frac{e^{it\tau}e^{(t-|t|)k^2}-e^{-|t| k ^2}}{i\tau+k^2}\tilde{w}(\tau,k)d\tau \right].$$
In particular by  Plancherel and Minkowski,
$$
\|\L f\|_{ \widetilde{L^\infty_{t} L^2_{x}}}=\|U(-t) \L f\|_{ \widetilde{L^\infty_{t} L^2_{x}}}
\le  \Bigl( \sum_{N} \Bigl\|  \eta(t)\varphi_N(k)\int_\R\frac{e^{it\tau}e^{(t-|t|)k^2}-e^{-|t|k^2}}{i\tau+k^2} \tilde{w}(\tau,k) d\tau
  \Bigl\|_{L^2_{k}L^\infty_{t}}^2 \Bigr)^{1/2} .
$$
Now for $ k\in \Z $ fixed and $ v\in {\cal S}(\R) $ we set
$$K_k(v)(t) = \eta(t)\varphi_N(k)\int_\R\frac{e^{it\tau}e^{(t-|t|)k^2}-e^{-|t|k^2}}{i\tau+k^2} v (\tau)d\tau.$$
We thus are  reduced to show that for any $ k\in \Z $,
\begin{equation}\label{est-lin5b}\|K_k(v)\|_{L^\infty_t}\lesssim \Big\|\frac{\varphi_N(k)v(\tau)}{\langle i\tau+k^2\rangle} \Big\|_{L^1_\tau}\end{equation}
and
\begin{multline}\label{est-lin6}
\sum_{L\le N^3}\langle L+N^2\rangle ^{1/2}\| P_L(K_k (    \Phi_{\ge 2N^3}   v) )\|_{L^2_t}\\
\lesssim \Big\|\frac{\varphi_N(k)v(\tau)}{\langle i\tau+k^2\rangle} \Big\|_{L^1_\tau}+\sum_{L=N^3/2}^{N^3} \varphi_N(k) \langle L+ N^2\rangle ^{-1/2}
\| \varphi_{L}  v  \|_{L^2_\tau}.
 \end{multline}
 where we set $ \Phi_{\ge 2N^3}:= \sum_{L\ge 2 N^3} \varphi_L $. 
To prove \re{est-lin5b} it suffices to notice that
$$
\|K_k(v)\|_{L^\infty_t}\lesssim  \|\eta \|_{L^\infty} \varphi_N(k)\int_{\R}\frac{|v(\tau)|}{|i\tau+k^2|}\, d\tau
$$
which gives the result for $ k\neq 0 $. In the case $ k=0 $ we use a Taylor expansion to get
\begin{eqnarray*}
\|K_0(v)\|_{L^\infty_t} & \lesssim & \varphi_N(k) \Bigl[ \|\eta\|_{L^\infty} \int_{|\tau|\ge 1 }  \frac{|v (\tau)|}{|\tau| }\, d\tau +
 \sum_{n\ge 1}\frac{1}{n!}\|t^n\eta\|_{L^\infty}  \int_{|\tau|\le 1 }  \frac{|\tau|^n |v(\tau)|}{|\tau| }\, d\tau \Bigr] \\
  & \lesssim &  \varphi_N(k) \int_{\R }  \frac{|v (\tau)|}{\langle \tau\rangle }\, d\tau   \sum_{n\ge 0}\frac{1}{n!}\|t^n \eta\|_{L^\infty}
\end{eqnarray*}
which is acceptable since $ \|t^n \eta\|_{L^\infty}\lesssim 2^n$. \\
 To get \re{est-lin6}  we first rewrite $ K_k(\Phi_{\geq 2N^3} v) $ as
 $$
 \eta(t)e^{(t-|t|)k^2}\varphi_N(k)\int_{\R}\frac{e^{it\tau}}{i\tau+k^2} \Phi_{\ge 2N^3} (\tau) v(\tau)d\tau - \eta(t)\varphi_N(k)\int_{\T}\frac{e^{-|t|k^2}}{i\tau+k^2}
\Phi_{\ge 2N^3}(\tau)  v(\tau)d\tau.
 $$
 The contribution of the second term  is easily controlled by the first term of the right-hand side of \re{est-lin6} since, according to \cite{MV1},
 $$
 \sum_L\cro{L+N^2}^{1/2}\|\varphi_N(k)P_L(\eta(t)e^{-|t| k^2})\|_{ L^2_t}\lesssim 1.
 $$
  To treat the contribution of the first one, we set $ \theta(t)=\eta(t)e^{(t-|t|)k^2}$  and rewrite this contribution as $ \sum_{L\le N^3} I_L $ with
  \begin{equation} \label{za}
 I_L:=\langle L+N^2\rangle ^{1/2}\Big\| \varphi_N(k) \varphi_L(\tau) \Bigl(
 \widehat{\theta}(\tau')\star [\Phi_{\ge 2N^3}(\tau') \frac{v(\tau')}{i\tau'+k^2} ]\Bigl)(\tau)\Big\|_{L^2_\tau}.
  \end{equation}
  For $ L\le N^3/4 $,  by support considerations we may replace $ \widehat{\theta}(\tau')$ by $\chi_{|\tau'|\ge \frac{N^3}{2}} \widehat{\theta}(\tau') $ in \re{za}.
 Since it is not too hard to check that  two integrations by parts yield
  $|\hat{\theta}(\tau)|\lesssim\frac{\cro{k}^2}{|\tau|^2}$, this ensures that
  \begin{eqnarray*}
\sum_{L\le N^3/4}  I_L &\lesssim & \sum_{L\le N^3/4}\langle L+N^2\rangle ^{1/2} \varphi_N(k)\| \chi_{|\tau|\ge \frac{N^3}{2}} \widehat{\theta}\|_{L^2}  \Big\|\frac{v}{\langle i\tau+k^2\rangle}\Big\|_{L^1}\\
  &\lesssim &  \sum_{L\le N^3/4}\langle L+N^2\rangle ^{1/2}N^{-5/2}\varphi_N(k)
   \Big\|\frac{v}{\langle i\tau+k^2\rangle}\Big\|_{L^1}\\
  & \lesssim & \varphi_N(k)\Big\|\frac{v}{\langle i\tau+k^2\rangle}\Big\|_{L^1}.
  \end{eqnarray*}
  Now, for $ L= N^3$ (Note that  the case $ L=N^3/2 $ can be treated in exactly the same way), we use that $ \varphi_L \equiv \eta(\cdot/2L)  \varphi_L  $ and that by the mean value theorem,
  $
  |\varphi_L(\tau)-\varphi_L(\tau') |\lesssim L^{-1} |\tau-\tau'|
   $. Substituting this in \re{za} we infer that
   \begin{align*}
   I_{N^3} &\lesssim \langle N^3+N^2\rangle ^{1/2}\varphi_N(k)\Bigl\|\int_{\R} \widehat{\theta}(\tau-\tau')\varphi_{N^3}(\tau') \frac{\Phi_{\geq 2N^3}(\tau')v(\tau')}{i\tau'+k^2}d\tau'
\Bigr\|_{L^2_\tau}\\
 &\quad +\langle N^3+N^2\rangle ^{1/2}\varphi_N(k) \Bigl\|\eta(\tau/4N^3)  N^{-3} \int_{\R} |\widehat{\theta}(\tau-\tau')||\tau-\tau'| \frac{|v(\tau')|}{|i\tau'+k^2|} \, d\tau'\Bigr\|_{L^2_\tau}\\
&:=I_{N^3}^1+I_{N^3}^2.
   \end{align*}
   Applying  Plancherel theorem, H\"{o}lder inequality in $t$  and then Parseval theorem,  the first term can be easily  estimated by
  \begin{eqnarray*}
I^1_{N^3}& \lesssim & \langle N^3+N^2\rangle ^{1/2} \varphi_N(k)\| \theta\|_{L^\infty}\Bigl\| \frac{\varphi_{N^3}(\tau) v(\tau)}{i\tau+k^2} \Bigr\|_{L^2} \\
& \lesssim &   \langle N^3\rangle ^{-1/2}  \varphi_N(k)\| \varphi_{N^3}(\tau) v(\tau) \|_{L^2}
   \end{eqnarray*}
   which is acceptable. Finally, note that  $ \|\widehat{\theta}\|_{L^\infty} \le \|\theta\|_{L^1} \le \| \eta\|_{L^1} \lesssim 1$ and that
 integrating by parts one time, it is not too hard to check that  $ |\widehat{\theta}(\tau)|\lesssim \frac{1}{\cro{\tau}}$. This ensures that the second term can be controlled by
  \begin{align*}
  I_{N^3}& \lesssim   \langle N^3\rangle ^{1/2} \varphi_N(k) \Bigl\|\eta(\tau/4N^3)  N^{-3} \int_{\R}\frac{|\tau-\tau'|}
  {\cro{\tau-\tau'}}  \frac{|v(\tau')|}{|i\tau'+k^2|} \, d\tau'  \Bigr\|_{L^2} \\
  & \lesssim  N^{-3/2}  \varphi_N(k)   \|\eta(\tau/4N^3) \|_{L^2_\tau} \Bigl\| \frac{v(\tau)}{i\tau+k^2} \Bigr\|_{L^1_\tau}
  \\
& \lesssim   \varphi_N(k)\Bigl\| \frac{v(\tau)}{i\tau+k^2} \Bigr\|_{L^1_\tau}.
  \end{align*}
   \end{proof}

\section{Bilinear estimate}\label{sectionbilinear}
In this section we provide a proof of the following crucial bilinear estimate.
\begin{proposition}\label{propo}
Let $ 0<\eps<1/12$. Then for all $u,v\in\S^{-1}_\eps$ it holds
\begin{equation}\label{est-bil}\|\partial_x(uv)\|_{\NN^{-1}_\varepsilon}\lesssim \|u\|_{\S^{-1}_\varepsilon}\|v\|_{\S^{-1}_\varepsilon}.\end{equation}
\end{proposition}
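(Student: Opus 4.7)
By bilinearity of $\partial_x(uv)$ I split $u=u_X+u_Y$ and $v=v_X+v_Y$ with the $X$-parts in $X^{-1,1/2,1}_\eps$ and the $Y$-parts in $Y^{-1,1/2}$, so that \re{est-bil} reduces to four bilinear contributions. In each I perform a Littlewood--Paley decomposition $u=\sum_{N_1,L_1}P_{N_1}Q_{L_1}u$, $v=\sum_{N_2,L_2}P_{N_2}Q_{L_2}v$ and organize the estimate by the frequency interaction type (by symmetry $N_1\le N_2$): the low-high case $N_1\ll N_2\sim N$, the coherent case $N_1\sim N_2\sim N$, and the high-high-to-low case $N_1\sim N_2\gg N$. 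The algebraic backbone is the resonance identity
\begin{equation*}
(\tau-k^3)-(\tau_1-k_1^3)-(\tau_2-k_2^3)=3kk_1k_2,\qquad k=k_1+k_2,
\end{equation*}
which forces $\max(L,L_1,L_2)\gtrsim|kk_1k_2|$.

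\textbf{Choice of output norm.} The target space $\NN^{-1}_\eps=(X^{-1,-1/2,1}_\eps\cap Z^{-1,-1/2})+Y^{-1,-1/2}$ offers three destinations for the output $P_NQ_L\partial_x(uv)$. When one of the input modulations $L_1,L_2$ is dominant, the plan is to dualize against an $L^2_{tx}$ test function and deploy the standard $X^{s,b}$ Strichartz toolbox on $\T$ (in particular Bourgain's $L^4_{tx}$ estimate) together with the strong modulation weight $\cro{L_i+N_i^2}^{1/2}$ coming from $\S^{-1}_\eps$. When the output modulation $L$ is dominant I have two options: either place the output in $X^{-1,-1/2,1}_\eps$ and exploit the $\eps$-regularity relaxation once $L>N^3$, or in $Z^{-1,-1/2}$, benefiting from the $L^1_\tau$-summability allowed by the dissipative denominator $\cro{i(\tau-k^3)+k^2}^{-1}$. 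Whenever a $Y^{-1,1/2}$ input is available I will use $L^1_tL^2_x$-type estimates and, if convenient, place the output in $Y^{-1,-1/2}$.

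\textbf{Main obstacle.} The truly delicate case is the high-high-to-low interaction $N_1\sim N_2\gg N$, where on $\R$ the paper \cite{MV1} invoked the Kato smoothing $\|\partial_xe^{-t\partial_{xxx}}\phi\|_{L^\infty_xL^2_t}\lesssim\|\phi\|_{L^2}$, unavailable on $\T$. I would replace this by the quantitative bound $\max(L,L_1,L_2)\gtrsim N_1^2N$ forced by resonance. If an input modulation saturates this lower bound, a direct $L^2_{tx}$ pairing with Cauchy--Schwarz in $k$ closes the estimate. If instead the output modulation is dominant and $L>N^3$, the weakened output norm $X^{-1-\eps,-1/2,1}_\eps$ is precisely what absorbs the $N^\eps$ that we are short of; the dyadic sum over $L>N^3$ then converges provided $\eps<1/12$, which is the hypothesis of the proposition. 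If $L\le N^3$ in this same scenario, the needed gain comes from the modulation weight on the dominant input combined with the $\widetilde{L^\infty_tH^{-1}}$ control provided by Lemma \ref{lem-Sbounds} for the low-frequency factor.

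\textbf{Conclusion.} Summing the resulting estimates over the dyadic parameters via Cauchy--Schwarz and the $\ell^2$ structure of the norms will yield \re{est-bil}. Away from the high-high-to-low region the argument essentially mirrors \cite{MV1}; the novelty lies in the systematic use of the $\eps$-relaxed $X$ norm, of the $Z^{-1,-1/2}$ component, and of the $\widetilde{L^\infty_tH^{-1}}$ bound from Lemma \ref{lem-Sbounds}, precisely where Kato smoothing would have been used on $\R$.
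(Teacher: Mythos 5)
Your outline correctly reproduces the architecture that the function spaces already dictate (dyadic decomposition, the three interaction regimes, the resonance relation, the three possible destinations inside $\NN^{-1}_\eps$), and it correctly locates the difficulty in the high-high-to-low regime where Kato smoothing is unavailable. But no estimate is actually carried out, and the two technical commitments you do make are the ones that fail. First, the workhorse you name, Bourgain's $L^4_{tx}$ estimate, is not strong enough at the endpoint $s=-1$: the paper's proof runs entirely on Tao's sharp bilinear $L^2$ convolution estimates (Lemma \ref{lem2}), whose factor $\min(L_1,L_2)^{1/2}\bigl(\max(L_1,L_2)^{1/4}N^{-1/4}+1\bigr)$ --- rather than the $(L_1L_2)^{1/3}$ that $L^4$ gives --- is what produces the $N_1^{-1/4+3\eps}$ decay in the dominant-input-modulation cases and hence the threshold $\eps<1/12$. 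A concrete test: in the high-high case with output modulation $L\sim N_1^2N$ and $L_1\sim L_2\sim N_1^2N$ non-dominant, the $L^4$ route requires $N_1^{1/3}\lesssim N^{5/6+\eps}$, which fails for $N\ll N_1^{2/5}$.

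Second, your mechanism for the hardest subregion ($L\sim N_1^2N$ dominant, $L_1\ll N_1^2N$, $L_2\le N_1^2N^{1-\eps}$) is misidentified. The gain there does not come from ``$L^1_\tau$-summability allowed by the dissipative denominator'', nor from ``the modulation weight on the dominant input combined with the $\widetilde{L^\infty_{t}H^{-1}}$ control for the low-frequency factor'': in this regime both input factors sit at the high frequency $N_1$, and the alternative $L\le N^3$ you discuss is vacuous since $L\gtrsim N_1^2N>N^3$ whenever $N\ll N_1$. What actually closes this case in the paper is Bourgain's convolution trick $\chi_{\cro{\sigma}\sim L}\lesssim\chi_{\cro{\sigma}\sim L}\star_\tau(L^{-1}\chi_{\cro{\sigma}\le L})$ applied to the $Z^{-1,-\frac12}$ component of the output: interpolating the resulting averaging operator $T_{K,K_2}$ between $L^1$ and $L^\infty$ yields the factor $\min(1,L_2/L)^{1/2}\lesssim N^{-\eps/2}$, i.e.\ the summability in $N$ is extracted from the mismatch between the output modulation $N_1^2N$ and the input modulation $N_1^2N^{1-\eps}$. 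This is then combined with the $L^\infty_tL^2_x$ bound of \re{est-L2S-11} applied to the \emph{high}-frequency factor $P_{N_1}Q_{\le N_1^3}u$ (available only for modulations $\le N_1^3$, which is why that truncation appears). Without these two ingredients the plan does not close.
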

We will need the  following sharp estimates  proved in \cite{Tao2}.
 \begin{lemma} \label{lem2}
Let $ u_1 $ and $ u_2 $ be two real valued $ L^2 $ functions defined on $ \R\times \Z $ with the following support properties
\begin{equation*}
(\tau,k)\in \supp u_i \Rightarrow |k|\sim N_i ,
\langle \tau-k^3 \rangle \sim L_i , \, i=1,2.
\end{equation*}
Then the following estimates hold:
\begin{equation*} \label{pro1est1}
\| u_1\star u_2\|_{L^2_\tau L^2(|k|\ge N)} \lesssim \min(L_1, L_2)^{1/2}  \Bigl(  \frac{\max(L_1, L_2)^{1/4}}{N^{1/4}} +1\Bigr)  \| u_1\|_{L^2}  \|  u_2\|_{L^2}
\end{equation*}
and
 if $N_1\gg N_2 $,
\begin{equation*} \label{pro1est2}
\| u_1\star u_2\|_{L^2} \lesssim \min(L_1, L_2)^{1/2} \Bigl(  \frac{\max(L_1, L_2)^{1/2}}{N_1} +1\Bigr)  \| u_1\|_{L^2}  \|  u_2\|_{L^2}.
\end{equation*}
\end{lemma}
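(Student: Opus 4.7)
The plan is to deduce both inequalities from the standard Cauchy--Schwarz / resonance-set argument on the product space $\R \times \Z$. For fixed $(\tau,k)$, the convolution
$$u_1 \star u_2(\tau,k) = \sum_{k_1 \in \Z} \int_{\R} u_1(\tau_1,k_1)\, u_2(\tau-\tau_1,k-k_1)\, d\tau_1$$
is supported on the resonance set
$$E(\tau,k) := \{(\tau_1, k_1)\in \R\times\Z : (\tau_1, k_1) \in \supp u_1,\ (\tau-\tau_1, k-k_1) \in \supp u_2\},$$
and Cauchy--Schwarz in $(\tau_1, k_1)$ gives $|u_1 \star u_2(\tau,k)|^2 \le |E(\tau,k)| \cdot (|u_1|^2 \star |u_2|^2)(\tau,k)$. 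Integrating in $(\tau,k)$ (over $\{|k|\ge N\}$ for the first estimate, over all of $\R\times\Z$ for the second) and applying Fubini to $|u_1|^2 \star |u_2|^2$ reduces matters to the uniform bound
$$\sup_{(\tau,k)} |E(\tau,k)| \lesssim \min(L_1,L_2) \cdot \#\{\text{admissible } k_1\},$$
in which the factor $\min(L_1,L_2)$ is immediate: for each fixed $k_1$ the variable $\tau_1$ is constrained to the intersection of two intervals of sizes $\sim L_1$ and $\sim L_2$.

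The remaining task is a lattice count. First I would invoke the cubic identity $k_1^3 + (k-k_1)^3 = k^3 - 3k\,k_1(k-k_1)$, so that summing the two modulation conditions forces
$$3k\,k_1(k-k_1) \in I_{\tau,k}, \qquad |I_{\tau,k}| \lesssim \max(L_1, L_2) =: M.$$
The affine change of variables $k_1 = k/2 + s$ (with $s \in \tfrac12 \Z$ depending on the parity of $k$) converts this into $3ks^2 \in J_{\tau,k}$ with $|J_{\tau,k}| \lesssim M$, i.e.\ $s^2$ lies in an interval of length $\lesssim M/|k|$.

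To finish estimate one, the output condition $|k|\ge N$ places $s^2$ in an interval of length $\lesssim M/N$; in the worst case this interval is centered at the origin, and the number of admissible (half-)integer $s$ is $\lesssim 1 + (M/N)^{1/2}$. Taking the square root of $|E| \lesssim \min(L_1,L_2)\,(1 + (M/N)^{1/2})$ yields the claimed factor $\min(L_1,L_2)^{1/2}\bigl(1 + \max(L_1,L_2)^{1/4}/N^{1/4}\bigr)$. For estimate two, the hypothesis $N_1 \gg N_2$ together with $|k_1|\sim N_1$ and $|k-k_1|\sim N_2$ forces $|k| \sim N_1$ and $|s|=|k_1 - k/2|\sim N_1$. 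Since $s \mapsto s^2$ now has derivative $\sim N_1$ on the relevant window, the count of admissible $s$ drops to $\lesssim 1 + M/N_1^2$, and square-rooting produces the sharper factor $\min(L_1,L_2)^{1/2}\bigl(1 + \max(L_1,L_2)^{1/2}/N_1\bigr)$.

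The main obstacle is precisely the non-uniform behaviour of the map $s\mapsto s^2$: near the origin one suffers square-root spreading (count $\sim M^{1/2}$), whereas far from the origin the map is essentially an isometry up to rescaling (count $\sim M$ divided by the local derivative). The two estimates reflect this dichotomy, and the case split is unavoidable: estimate one must accommodate the low-$|s|$ regime and so loses a square root in $M$, while estimate two exploits the decoupling $N_1 \gg N_2$ to push $|s|$ uniformly away from zero and recover the optimal exponent.
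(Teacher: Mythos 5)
Your proof is correct. Note first that the paper does not actually prove this lemma: it is quoted verbatim from Tao's multilinear weighted convolution paper \cite{Tao2}, where these bounds are obtained inside a general framework of $[k;Z]$-multiplier norms (box localization, the comparison principle, Schur-type tests). Your argument is the direct, self-contained route: Cauchy--Schwarz against the indicator of the resonance set $E(\tau,k)$, reduction to $\sup_{(\tau,k)}|E(\tau,k)|^{1/2}$, the factor $\min(L_1,L_2)$ from the $\tau_1$-fibres, and the lattice count via $k_1^3+(k-k_1)^3=k^3-3kk_1(k-k_1)$ and the substitution $s=k_1-k/2$, which places $s^2$ in an interval of length $\lesssim \max(L_1,L_2)/|k|$. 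This is exactly the counting argument underlying the cited result (it goes back to Bourgain's periodic KdV analysis), and your dichotomy is the right one: the output restriction $|k|\ge N$ in the first estimate only controls $|J_{\tau,k}|\lesssim \max(L_1,L_2)/N$ and one must accept the square-root spreading of $s\mapsto s^2$ near $s=0$, giving the count $1+(\max(L_1,L_2)/N)^{1/2}$; in the second, $N_1\gg N_2$ forces $|k|\sim N_1$ and $|s|\sim N_1$, so the quadratic map has derivative $\sim N_1$ there and the count improves to $1+\max(L_1,L_2)/N_1^2$. The only points worth making explicit in a written version are the exclusion of $k=0$ (automatic under $|k|\ge N\ge 1/2$ in the first bound, and forced by $|k|\sim N_1$ in the second) and the elementary inequality $\sqrt{b}-\sqrt{a}\le\sqrt{b-a}$ used in the near-origin count; neither is a gap. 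What your approach buys is transparency and self-containment; what the citation to \cite{Tao2} buys is a uniform machinery that also delivers the sharpness claims and many related estimates without redoing the count each time.
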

{\it Proof of Proposition \ref{propo}.}
First we remark that because of the $L^2_k$ structure of the spaces involved in our analysis we have the following localization property
$$\|f\|_{\S^{-1}_\varepsilon}\sim \Big(\sum_N\|P_Nf\|_{\S^{-1}_\varepsilon}^2\Big)^{1/2}\quad\textrm{and}\quad \|f\|_{\NN^{-1}_\varepsilon}\sim \Big(\sum_N\|P_Nf\|_{\NN^{-1}_\varepsilon}^2\Big)^{1/2}.$$
Performing a dyadic decomposition for $u, v$ we thus obtain
\begin{equation}\label{eq-dec}\|\partial_x(uv)\|_{\NN^{-1}_\varepsilon}\sim \Big(\sum_N\Big\|\sum_{N_1,N_2}P_N\partial_x(P_{N_1}uP_{N_2}v)\Big\|_{\NN^{-1}_\varepsilon}^2\Big)^{1/2}.\end{equation}
We can now reduce the number of case to analyze by noting that the right-hand side vanishes unless one of the following cases holds:
\begin{list}{$\bullet$}
\item (high-low interaction) $N\sim N_2$ and $N_1\lesssim N$,
\item \item (low-high interaction) $N\sim N_1$ and $N_2\lesssim N$,
\item (high-high interaction) $N\ll N_1\sim N_2$.
\end{list}
\noindent
The former two cases are symmetric. In the first case, we can rewrite the right-hand side of (\ref{eq-dec}) as
$$\|\partial_x(uv)\|_{\NN^{-1}_\varepsilon}\sim \Big(\sum_N\|P_N\partial_x(P_{\lesssim N}u P_Nv)\|_{\NN^{-1}_\varepsilon}^2\Big)^{1/2},$$
and it suffices to prove the high-low estimate
\begin{equation}\label{HL}\tag{HL}\|P_N\partial_x(P_{\lesssim N}uP_Nv)\|_{\NN^{-1}_\varepsilon}\lesssim \|u\|_{\S^{-1}_\varepsilon}\|P_Nv\|_{\S^{-1}_\varepsilon}\end{equation}
for any dyadic $N$. If we consider now the third case, we easily get
$$\|\partial_x(uv)\|_{\NN^{-1}_\varepsilon}\lesssim \sum_{N_1}\|P_{\ll N_1}\partial_x(P_{N_1}uP_{N_1}v)\|_{\NN^{-1}_\varepsilon},$$
and it suffices to prove for any $N_1$ the high-high estimate
\begin{equation}\label{HH}\tag{HH}\|P_{\ll N_1}\partial_x(P_{N_1}uP_{N_1}v)\|_{\NN^{-1}_\varepsilon}
\lesssim \|P_{N_1}u\|_{\S^{-1}_\eps}\|P_{N_1}v\|_{\S^{-1}_\varepsilon}\end{equation}
since the claim follows then from Cauchy-Schwarz.\\
Finally, since the $ \S^{-1}_\varepsilon$-norm only sees the size of the modulus of the space-time Fourier transform
 we can always assume that our functions have  real-valued non negative space-time  Fourier transform.\\
Before starting to estimate the different terms we recall the resonance relation associated with the KdV equation that reads
\begin{equation} \label{resonance}
(\tau_1-k_1^3)+(\tau_2-k_2^3)+(\tau_3-k_3^3)=3k_1k_2k_3 \mbox{ whenever } (\tau_1,k_1)+ (\tau_2,k_2)+ (\tau_3,k_3)=0.
\end{equation}

\subsection{High-Low interactions}
We decompose the bilinear term as $$P_N\partial_x(P_{\lesssim N}uP_Nv) = \sum_{N_1\lesssim N}\sum_{L, L_1, L_2}P_NQ_L\partial_x(P_{N_1}Q_{L_1}uP_NQ_{L_2}v).$$
Note first that we can always assume that  $ N_1\gg 1 $, since otherwise, by using Sobolev inequalities and \re{est-L2S-1}, it holds
\begin{eqnarray}
\sum_{N_1\lesssim 1}\|P_N\partial_x(P_{N_1}uP_N v)\|_{Y^{-1,-\frac 12}} &\lesssim & \sum_{N_1\lesssim 1 }\cro{N}^{-1}N\|P_N(P_{N_1}uP_N v)\|_{L^1_tL^2_x}\nonumber \\
&\lesssim & \sum_{N_1\lesssim 1}\|P_{N_1}u\|_{L^2_tL^\infty_x}\|P_N  v\|_{L^2}\nonumber \\
&\lesssim &  \sum_{N_1\lesssim 1 }N_1^{1/2}\|P_{N_1}u\|_{L^2}\|P_N  v\|_{L^2}\nonumber \\
&\lesssim & \|u\|_{\S^{-1}_\eps}\|P_Nv\|_{\S^{-1}_\eps}  \label{uhuh}
\end{eqnarray}
as soon as  $ \varepsilon\le 1/2 $.
We now separate different regions.  It is worth noticing that \re{resonance} ensures that
 $ \max(L,L_1,L_2) \gtrsim N^2 N_1 $.
\subsubsection{ $L\gtrsim N^2N_1$}
We set $ L\sim 2^l N^2 N_1 $. Taking advantage of the $X^{-1,-\frac 12,1}_\eps\cap Z^{-1,-\frac 12}$ part of $\NN^{-1}_\eps$ as well as the continuous embedding $X^{-1,-\frac 12,1} \hookrightarrow X^{-1,-\frac 12,1}_\eps\cap Z^{-1,-\frac 12}$,  by using  Lemma \ref{lem2} we get
\begin{align*}
I_1:&=\sum_{1\ll N_1\lesssim N}\sum_{\substack{l\ge 0\\L_1, L_2 }}\|P_NQ_L\partial_x(P_{N_1}Q_{L_1}uP_N Q_{L_2} v)\|_{X^{-1,-\frac 12,1}}\\
& \lesssim \sum_{1\ll N_1\lesssim N}\sum_{\substack{l\ge 0\\L_1, L_2 }} 2^{-l/2} N^{-1} N_1^{-1/2}
\|P_N Q_L(P_{N_1}Q_{L_1}uP_N Q_{L_2} v)\|_{L^2}\\
& \lesssim \sum_{1\ll N_1\lesssim N}\sum_{\substack{l\ge 0\\L_1, L_2 }}
 2^{-l/2} N^{-1} N_1^{-1/2}
(L_1\wedge L_2)^{1/2} \Bigl( \frac{(L_1\vee L_2)^{1/4}}{N^{1/4}} +1 \Bigr)\\
&\quad \times\| P_{N_1} Q_{L_1} u\|_{L^2}
 \| P_N Q_{L_2} v\|_{L^2}
 \end{align*}
 Noticing that for any $0<\alpha<1$  it holds
$$(L_1\wedge L_2)^{1/2} \Bigl( \frac{(L_{1}\vee L_2)^{1/4}}{N^{1/4}} +1 \Bigr) \lesssim (L_{1}\vee L_2)^{-\frac{\alpha}{4}}
 N^{-\frac{3-2\alpha}{4}}(L_{1}+N_{1}^2)^{1/2}(L_2+N^2)^{1/2},$$
we deduce that
 $$
I_1 \lesssim \sum_{1\ll N_1\lesssim N}\sum_{\substack{l\ge 0\\L_1, L_2 }} 2^{-l/2}  (L_{1} L_2)^{-\alpha/8}
 \Bigl( \frac{ N_{1}}{N}\Bigl)^{\frac{5-2\alpha}{8}}
 \| P_{N_1} Q_{L_1} u\|_{X^{-\frac{9-2\alpha}{8},\frac 12,1}}
 \| P_{N} Q_{L_2} v\|_{X^{-\frac{9-2\alpha}{8},\frac 12,1}}
$$
Taking $\alpha >0$ small enough this proves with \re{est-L2l2} that
$$
I_1\lesssim  \|u\|_{\S^{-1}_\eps}\|P_{N}v\|_{\S^{-1}_\eps}
$$
whenever $ \eps<1/8 $.
\subsubsection{$L_1\gtrsim N^2 N_1$ and  $ L\ll N^2 N_1 $}\label{zz}
We can set $ L_1\sim 2^l N^2 N_1 $ with $l\geq 0$. By duality, it is equivalent to show that \footnote{The space
$  X^{1,\frac 12, \infty} $ is endowed with the norm
$$
\|u\|_{X^{1,\frac 12, \infty}}:=\Big(\sum_N\sup_{{L}}\Big[\cro{N}^{s}\cro{L+N^2}^{b}\|P_NQ_Lu\|_{L^2_{xt}}\Big]^{2}\Big)^{1/2}
$$}
$$I_2\lesssim \|u\|_{\S^{-1}_\eps}\|P_Nv\|_{\S^{-1}_\eps}\|P_Nw\|_{X^{1,\frac 12, \infty}}$$
where
\begin{eqnarray*}
I_2& :=&\sum_{1 \ll N_1\lesssim N}\sum_{\substack{l\ge 0\\L_2, L\ll N^2 N_1 }}
\Bigl| \Bigl( P_NQ_L w \, ,\, \partial_x(P_{N_1}Q_{2^l N^2 N_1 }u P_N Q_{L_2} v)\Bigr)_{L^2} \Bigr|\\
&=&\sum_{1\ll  N_1\lesssim N}\sum_{\substack{l\ge 0\\L_2, L\ll N^2 N_1 }}
\Bigl| \Bigl( \widetilde{P_{N_1}Q_{2^l N^2 N_1 }u}\, ,\, \widetilde{\partial_x P_NQ_L w} \star \check{\widetilde{P_N Q_{L_2} v }})\Bigr)_{L^2} \Bigr|
\end{eqnarray*}
and $ {\check \theta}(\tau,k)=\theta(-\tau,-k) $. According to Lemma \ref{lem2} we get
\begin{eqnarray*}
I_2& \lesssim&\sum_{1\ll  N_1\lesssim N}\sum_{\substack{l\ge 0\\L_2, L\lesssim N^2 N_1 }} 2^{-l/2}
N^{-1} N_1^{-1/2} (L_1^{1/2} \|P_{N_1}Q_{2^l N^2 N_1 }u\|_{L^2})\Bigl\| \widetilde{\partial_x P_N Q_L w}\star  \check{ \widetilde{ P_N Q_{L_2} v}}\Bigr\|_{L^2}\\
& \lesssim&\sum_{1\ll  N_1\lesssim N}\sum_{\substack{l\ge 0\\L_2, L\lesssim N^2 N_1 }} 2^{-l/2}
N^{-1} N_1^{-1/2} (L_1^{1/2} \|P_{N_1}Q_{2^l N^2 N_1 }u\|_{L^2}) \\
& &\times
(L\wedge L_2)^{1/2} \Bigl( \frac{(L\vee L_2)^{1/4}}{N_1^{1/4}} +1 \Bigr)
\| \partial_x P_{N} Q_{L} w\|_{L^2}
 \| P_N Q_{L_2} v\|_{L^2}.
\end{eqnarray*}
Since for any $\eps>0$ we have the estimate
$$(L\wedge L_2)^{1/2} \Bigl( \frac{(L\vee L_2)^{1/4}}{N_1^{1/4}} +1 \Bigr) \lesssim (L\vee L_2)^{-\varepsilon/2} N_1^{-\frac 34+\eps}(L+N^2)^{1/2}(L_2+N^2)^{1/2},$$
it follows that
\begin{eqnarray*}
 I_2& \lesssim&\sum_{1\ll  N_1\lesssim N}\sum_{\substack{l\ge 0\\L_2, L\lesssim N^2 N_1 }}2^{-l/2}
  (L L_2)^{-\varepsilon/4}
N_1^{-\frac{1}{4}+3\varepsilon}   \\
& & \times\|P_{N_1}Q_{2^l N^2 N_1 }u\|_{X^{-1-\varepsilon,\frac 12,1}}
\|  P_{N} Q_{L} w\|_{X^{1,\frac 12,\infty}}
  \| P_N Q_{L_2} v\|_{X^{-1-\varepsilon,\frac 12,1}}
\end{eqnarray*}
which is acceptable whenever $ \varepsilon<1/12$.
\subsubsection{$L_2\gtrsim N^2 N_1$ and $  \, L\vee L_1 \ll N^2 N_1$}
In this region thanks to the resonance relation \re{resonance} one has  $ L_2\sim N^2 N_1 $. We proceed as in the preceding subsection.
We get
\begin{align}
I_3& :=
\sum_{1\ll N_1\lesssim N}\sum_{L\vee L_1\ll N^2 N_1 }
 \Bigl| \Bigl( P_NQ_L w \, ,\, \partial_x(P_{N}Q_{ N^2 N_1 }v P_{N_1} Q_{L_1} u)\Bigr)_{L^2} \Bigr|\nonumber\\
& \lesssim \sum_{1\ll N_1\lesssim N}\sum_{L\vee L_1\lesssim N^2 N_1 }
N^{-1} N_1^{-1/2} (L_2^{1/2} \|P_{N}Q_{ N^2 N_1 }v\|_{L^2}) \nonumber\\
&
\quad \times(L\wedge L_1)^{1/2} \Bigl( \frac{(L\vee L_1)^{1/4}}{N^{1/4}} +1 \Bigr)
\| \partial_x P_{N} Q_{L} w\|_{L^2}
 \| P_{N_1} Q_{L_1} u\|_{L^2}.\label{est-nl1}
 \end{align}
On the other hand, we clearly have
$$
(L\wedge L_1)^{1/2} \Bigl( \frac{(L\vee L_1)^{1/4}}{N^{1/4}} +1 \Bigr) \lesssim (L\vee L_1)^{-\varepsilon/2} N_1^{-\frac 34+\eps}(L+N^2)^{1/2}(L_1+N_1^2)^{1/2}
$$
Inserting this into (\ref{est-nl1}) we deduce
 \begin{align}
 I_3&  \lesssim\sum_{1\ll N_1\lesssim N}\sum_{L\vee L_1\lesssim N^2 N_1 }
  (L L_1)^{-\varepsilon/4}
N_1^{-\frac{1}{4}+2\varepsilon}   \|P_{N}Q_{ N^2 N_1 }v\|_{X^{-1,\frac 12,1}} \nonumber\\
& \quad\times
\|  P_{N} Q_{L} w\|_{X^{1,\frac 12,\infty}}
  \| P_{N_1} Q_{L_1} u\|_{X^{-1-\varepsilon,\frac 12,1}}.\label{ml}
\end{align}
Now either $ N_1\le N $ or $ N_1 \sim N $. In the first case we have
$   \|P_{N}Q_{ N^2 N_1 }v\|_{X^{-1,\frac 12,1}}=   \|P_{N}Q_{ N^2 N_1 }v\|_{X^{-1,\frac 12,1}_\eps}$ which shows that   \re{ml}  is acceptable for $0< \varepsilon<1/8 $.
 In the  second case, we have
 $  N_1^{-\varepsilon}\|P_{N}Q_{ N^2 N_1 }v\|_{X^{-1,\frac 12,1}}\le   \|P_{N}Q_{ N^2 N_1 }v\|_{X^{-1,\frac 12,1}_\eps}$ which shows that
  \re{ml} is acceptable for $0< \varepsilon<1/12 $.
\subsection{High-High interactions}
We perform the decomposition
$$P_{\ll N_1}\partial_x(P_{N_1}uP_{N_1}v) = \sum_{N\ll N_1}\sum_{L,L_1,L_2}P_NQ_L\partial_x(P_{N_1}Q_{L_1}uP_{N_1}Q_{L_2}v).$$
By symmetry we can assume that $L_1\ge L_2 $.  Then \re{resonance} ensures that $ \max(L,L_1) \gtrsim N N_1^2 $.
\subsubsection{ $ N_1^2 N \lesssim L_1 \le N_1^3 $.}
We can set  $L_1\sim 2^l N_1^2N $ with $ l\ge 0 $. Using the $Y^{-1,-\frac 12}$ part of $\NN^{-1}_\eps$, we want to estimate
\arraycolsep3pt
\begin{align*}
I_4&:=  \Big\|\sum_{N\ll N_1\atop l\ge 0 }P_N\partial_x(P_{N_1}Q_{2^l N_1^2N}uP_{N_1}v)\Big\|_{Y^{-1,-\frac 12}}\\
&\lesssim \Big(\sum_{N\ll N_1\atop l\ge 0}\|P_N(P_{N_1}Q_{2^l N_1^2N}uP_{N_1}v)\|_{L^1_tL^2_x}^2\Big)^{1/2}\\
  & \lesssim \Big(\sum_{N\ll N_1\atop l\ge 0}[N^{1/2}\|P_{N_1}Q_{2^l N_1^2N}u\|_{L^2}\|P_{N_1}v\|_{L^2}]^2\Big)^{1/2}.
\end{align*}
According to \re{est-L2S-1} and \re{est-L2l2}, this leads for $ \varepsilon\le 1/2 $ to
\begin{eqnarray*}
I_4 & \lesssim & \Big(\sum_{N\ll N_1\atop l\ge 0}  2^{-l}\sum_{L_1\sim 2^l N^2_1N\le N_1^3}[N_1^{-1}L_1^{1/2}\|P_{N_1}Q_{L_1}u\|_{L^2}]^2\Big)^{1/2}\|P_{N_1}v\|_{\S^{-1}_\varepsilon}\\
& \lesssim & \Big(\sum_{L_1 \le N_1^3 \atop l\ge 0 }   2^{-l}\sum_{N\sim   2^{-l} L_1N_1^{-2}}[N_1^{-1}L_1^{1/2}\|P_{N_1}Q_{L_1}u\|_{L^2}]^2\Big)^{1/2}\|P_{N_1}v\|_{\S^{-1}_\varepsilon}\\
& \lesssim & \Big(\sum_{L_1\le N_1^3} [N_1^{-1}L_1^{1/2}\|P_{N_1}Q_{L_1}u\|_{L^2}]^2\Big)^{1/2}\|P_{N_1}v\|_{\S^{-1}_\varepsilon}\\
& \lesssim & \|P_{N_1}u\|_{\S^{-1}_\varepsilon}\|P_{N_1}v\|_{\S^{-1}_\varepsilon}.
\end{eqnarray*}
\subsubsection{ $ L_1 \ge N_1^3$}\label{523}
We can set $ L_1 = 2^l N_1^3 $ with $ l\ge 0 $.  We proceed  by duality as in Subsection \ref{zz}  to get
$$
I_5 \lesssim\sum_{1\lesssim N\ll N_1}\sum_{\substack{l\ge 0\\L_2, L }}
 2^{-l/2} N_1^{-\frac 12+\eps} (N_1^{-1-\eps}L_1^{1/2} \|P_{N_1}Q_{2^l  N_1^3 }u\|_{L^2})
 \Bigl\| \widetilde{\partial_x P_N Q_L w}\star
  \check{ \widetilde{ P_{N_1} Q_{L_2} v}}\Bigr\|_{L^2}.
$$
By virtue of Lemma \ref{lem2}, we have the bound
\begin{align*}
\Bigl\| \widetilde{\partial_x P_N Q_L w}\star
  \check{ \widetilde{ P_{N_1} Q_{L_2} v}}\Bigr\|_{L^2} &\lesssim (L\wedge L_2)^{1/2} \Bigl( \frac{(L\vee L_2)^{1/4}}{N_1^{1/4}} +1 \Bigr)
\| \partial_x P_{N} Q_{L} w\|_{L^2}
 \| P_{N_1} Q_{L_2} v\|_{L^2}\\
 &\lesssim (L\vee L_2)^{-\varepsilon/2}N_1^{\frac 14+2\eps} \|  P_{N} Q_{L} w\|_{X^{1,\frac 12,\infty}} \| P_{N_1} Q_{L_2} v\|_{X^{-1-\varepsilon,\frac 12,1}}.
\end{align*}
Thus it is enough to check that
$$\sum_{1\lesssim N\ll N_1}\sum_{\substack{l\ge 0\\L_2, L }}2^{-l/2}
  (L L_2)^{-\varepsilon/4}
N_1^{-\frac{1}{4}+3\varepsilon}\lesssim 1,$$
but this is easily verified for $\eps<1/12$.
\subsubsection{ $L\gtrsim N_1^2 N$ and $ N_1^2 N^{1-\varepsilon} \le L_{2}\le L_{1}\ll N_1^2 N$}
Then, by the resonance relation \re{resonance} we must have $ L\sim N_1^2 N$. We  set $ L_2\sim2^q  N_1^2 N^{1-\varepsilon}$ and $ L_1=2^p L_2 $ with $q\ge 0 $ and $ p\ge 0$.
Since $ N\ll N_1 $ we are in the region $ L\gtrsim N^3 $. However since $ X^{-1,-\frac 12,1} \hookrightarrow   X^{-1-\varepsilon,-\frac 12,1}\cap Z^{-1,-\frac 12} $,
it suffices to show that
$$I_6\lesssim \|P_{N_1}u\|_{\S^{-1}_\eps}\|P_{N_1}v\|_{\S^{-1}_\eps}\Big(\sum_N\|P_NQ_{N_1^2N}w\|_{X^{1, \frac 12,\infty}}^2\Big)^{1/2}$$
where
$$
I_6:=\sum_{N\ll N_1} \sum_{p\ge 0,q\ge 0 }\Big|\Big( P_NQ_{N_1^2 N} w,\partial_x ( P_{N_1} Q_{L_1} u P_{N_1}
 Q_{L_2} v) \Big)_{L^2}\Big|.
$$
Using Lemma \ref{lem2} we get
 \begin{align*}
 I_6 &\lesssim  \sum_{N\ll N_1} \sum_{p\ge 0,q\ge 0 } \| P_{N_1} Q_{L_1} u \|_{L^2}
 N_1^{-1} \| P_N Q_{N_1^2 N} w \|_{X^{1,\frac 12,\infty}} \| P_{N_1} Q_{L_2} v \|_{X^{0,\frac 12,1}} \\
 & \lesssim  \sum_{N\ll N_1} \sum_{p\ge 0,q\ge 0 }
   2^{-q/2} 2^{-p/2} N^{-\frac{1}{2}+\frac{\varepsilon}{2}}
    \| P_{N_1} Q_{2^p 2^q N_1^2 N^{1-\varepsilon}} Q_{\le N_1^3} u \|_{X^{-1,\frac 12,1}}
 \\
  & \quad\times \| P_N Q_{N_1^2 N}w \|_{X^{1,\frac 12,\infty}}\| P_{N_1} Q_{2^q  N_1^2 N^{1-\varepsilon}}
   Q_{\le N_1^3}v \|_{X^{-1,\frac 12,1}}
 \end{align*}
 which is acceptable as soon as $ \eps<1$.

\subsubsection{$L\gtrsim N_1^2 N$ ,  $ L_1 \ll N_1^2 N  $ and $ L_{2}\le N_1^2 N^{1-\varepsilon} $}
Since $ N\ll N_1 $ we are in the region $ L> N^3 $. It thus suffices to  estimate  both the $ X^{-1-\varepsilon,-\frac 12,1} $ and the $ Z^{-1,-\frac 12} $ norms. Let us start by estimating the first one.  Note that in this region we can replace $ P_{N_1} u $ and $ P_{N_1} v$ by
 $ P_{N_1}  Q_{\le N_1^3}  u $ and  $ P_{N_1}  Q_{\le N_1^3}  v $. Taking into account the gain of $\eps$ in the definition of the space, we get
\begin{align*}
\| \partial_x(uv )\|_{X^{-1-\varepsilon,-\frac 12,1}}& \lesssim \sum_{1\lesssim N\ll N_1}
 N^{-\varepsilon} N_1^{-1} N^{-1/2}  \|P_N (P_{N_1} Q_{\le N_1^3} u P_{N_1} Q_{\le N_1^3} v)\|_{L^2}\\
 & \lesssim \sum_{1\lesssim N\ll N_1}
 N^{-\varepsilon} N_1^{-1}   \|P_{N_1}Q_{\le N_1^3}u
 \,
 P_{N_1}
Q_{\le N_1^3}  v\|_{L^2_t L^1_x }\\
 & \lesssim \sum_{1\lesssim N\ll N_1}
 N^{-\varepsilon}  (N_1^{-1}
 \|P_{N_1}Q_{\le N_1^3}u\|_{L^\infty_t L^2_x}) \|P_{N_1} Q_{\le N_1^3} v\|_{L^2 },
\end{align*}
which is acceptable as soon as $ \varepsilon>0 $.
It remains to estimate the $ Z^{-1,-\frac 12} $-norm.
 By duality we have to estimate
$$
I_7:=  \sum_{N\ll N_1} N_1^{-2} N^{-1} \Bigl| \Bigl( \widehat{P_N w }\chi_{\langle \sigma\rangle\sim N_1^2 N}
\, , \, \widetilde{ P_{N_1}Q_{\le N_1^3} u}\star
\widetilde{P_{N_1} Q_{ \le N_1^2 N^{1-\varepsilon }} v}  \Bigr)_{L^2}\Bigr|
$$
where $ w$ only depends on $ k$ and with $\sigma=\tau-k^3$ (recall that we can assume that  the space-time  Fourier transforms of $ u $ and $ w$ are
 non negative real-valued functions). We follow an idea that can be found in  \cite{Bo3}.
First we notice that for any fixed $ k $,
$$
\chi_{\langle\sigma \rangle\sim L} \lesssim \chi_{\langle\sigma \rangle\sim L}\star_\tau (\frac{1}{L}  \chi_{\langle\sigma \rangle\le L})
$$
and thus the above scalar product  can be rewritten as
$$
 \Bigl| \Bigl( \widehat{P_N w }
\chi_{\langle \sigma\rangle\sim N_1^2 N} \, ,\,
\widetilde{P_{N_1}Q_{\le N_1^3} u}\star  \widetilde{P_{N_1}
Q_{\le N_1^2 N^{1-\varepsilon}  } v} \star_\tau (\frac{1}{ N_1^2 N}  \chi_{\langle\sigma
\rangle\le  N_1^2 N}) \Bigr)_{L^2}\Bigr|
$$
where $ {\mathcal F}^{-1} \Bigl[\widetilde{P_{N_1} Q_{\le N_1^2 N^{1-\varepsilon }} v}
\star_\tau (\frac{1}{ N_1^2 N}  \chi_{\langle\sigma \rangle\le  N_1^2 N}) \Bigr]$ is of the form $ P_{N_1} Q_{\lesssim  N_1^2 N} v' $ with
\begin{equation}\label{dc}
\| P_{N_1} Q_{\lesssim  N_1^2 N} v' \|_{L^2}
 \lesssim
N^{-\varepsilon/2}
\| P_{N_1}  Q_{ \le N_1^2 N^{1-\varepsilon}  } v \|_{L^2} \; .
\end{equation}
Indeed the linear operator $ T_{K,K_2}\, :\, v\mapsto \frac{1}{K} v (\cdot)  \chi_{\{\langle \cdot \rangle \le  K_2\}}\star \chi_{\{\langle \cdot \rangle \le  K\}}$
 is a continuous endomorphism  of $ L^1(\R) $ and $ L^\infty(\R) $ with
 \begin{eqnarray*}
 \| T_{K_,K_2}v\|_{L^\infty (\R)}  &\le & \sup_{x\in\R} \frac{1}{K} \Bigl|  \int_{\R} v(y) \chi_{\{\langle y \rangle \le   K_2\}} \chi_{\{\langle x-y \rangle \le  K\}}
 \, dy \Bigr| \\
 & \lesssim &  \frac{\min (K,K_2)}{K} \| v \|_{L^\infty(\R)}
 \end{eqnarray*}
 and
 $$
  \| T_{K_,K_2}v\|_{L^1 (\R)}  \le \frac{1}{K} \| v\|_{L^1(\R)} \|  \chi_{\{\langle \cdot \rangle \le  K\}}\|_{L^1(\R)} \lesssim \|v \|_{L^1(\R)} \; .
 $$
 Therefore, by Riesz interpolation theorem $ T_{K,K_2} $ is a continuous endomorphism of $ L^2(\R) $ with
 $$
  \| T_{K_,K_2}v\|_{L^2 (\R)} \lesssim \min\Bigl(1,
\frac{K_2}{K} \Bigr)^{1/2} \| v\|_{L^2(\R)} \; .
 $$
 Hence, by Sobolev in $k $ and \re{dc},
 \begin{eqnarray*}
 I_7 & \lesssim &  \sum_{N\ll N_1} N_1^{-2} N^{-1}  N^{1/2} \| \widehat{P_N w}
\chi_{\langle \sigma\rangle\sim  N_1^2 N}\|_{L^2}
\| P_{N_1} Q_{\le N_1^3} u\|_{L^\infty_t L^2_x} \| P_{N_1} Q_{\lesssim N_1^2 N} v' \|_{L^2}\\
& \lesssim & \sum_{N\ll N_1}   N^{-\varepsilon/2}
\|P_N w \|_{L^2(\T)} (N_1^{-1}  \| P_{N_1} Q_{\le N_1^3} u\|_{L^\infty_t L^2_x}) \| P_{N_1} v\|_{L^2},
\end{eqnarray*}
which is acceptable as soon as $ \varepsilon>0 $.\hfill  $\square$

\section{Well-posedness}\label{sectionwell}
In this section, we prove the well-posedness result. The proof follows exactly the same lines as in \cite{MV1}. Using a standard fixed point procedure, it is clear that the bilinear estimate (\ref{est-bil}) allows us to show local well-posedness but for small initial data only. This is because $H^{-1}$ appears as a critical space for KdV-Burgers.
Indeed, on one hand,  we cannot get any contraction factor by restricting time. On the other hand, a dilation argument does not work here since the reduction of the $ H^{-1} $-norm of the dilated initial data would be  exactly compensated by the diminution of the
 dissipative coefficient in front of $ u_{xx}$ (that we take equal to $ 1 $ in \re{KdVB}) in the equation satisfied by the  dilated solution. In order to remove the size restriction on the data, we  change the metric on our resolution space.

For $ 0<\varepsilon<1/12 $ and $\beta\ge 1 $, let us define the following norm on $ \widetilde{\S^{-1}_\varepsilon} $,
$$\|u\|_{\ZZ_\beta} = \inf_{\substack{u=u_1+u_2\\ u_1\in\widetilde{\S^{-1}_\varepsilon}, u_2\in \widetilde{\S^0_\varepsilon}}}\left\{\|u_1\|_{\widetilde{\S^{-1}_\varepsilon}}+\frac 1\beta \|u_2\|_{\widetilde{\S^0_\varepsilon}}\right\}.$$
Note that this norm is equivalent to $ \| \cdot \|_{\widetilde{\S^{-1}_\varepsilon}} $.   Now we will need the following modification
 of Proposition \ref{propo}. This new proposition means that as soon as we assume more regularity  on $ u$ we can get
  a contractive factor for small times in the bilinear estimate.
 \begin{proposition}
There exists $ \nu>0 $ such that for any  $ 0<\varepsilon<1/12 $ and all $(u,v)\in\S^{0}_\varepsilon\times \S^{-1}_\varepsilon $, with compact support (in time) in $[-T,T]$,
 it holds\
\begin{equation}\label{est-bil3}\|\partial_x(uv)\|_{\NN^{-1}_\varepsilon}\lesssim T^\nu \|u\|_{\S^{0}_\varepsilon}\|v\|_{\S^{-1}_\varepsilon}.
\end{equation}
 \end{proposition}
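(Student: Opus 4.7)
My strategy is to redo the dyadic case analysis of Proposition \ref{propo} while trading a small amount $\delta > 0$ of the $b$-index for a $T^\delta$ factor (via time restriction), and absorbing the resulting dyadic losses with the extra derivative of regularity on $u$.

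First, since $u$ and $v$ have compact support in $[-T,T]$, writing $u=\eta_T u$, $v=\eta_T v$ with $\eta_T(t)=\eta(t/T)$ and invoking Bourgain's time-restriction lemma, I obtain for any $0<\delta<1/4$
$$\|u\|_{X^{0,\frac 12 -\delta,1}_\eps + Y^{0,\frac 12 -\delta}} \lesssim T^\delta\, \|u\|_{\S^0_\eps},\qquad \|v\|_{X^{-1,\frac 12 -\delta,1}_\eps + Y^{-1,\frac 12 -\delta}} \lesssim T^\delta\, \|v\|_{\S^{-1}_\eps}.$$
(On the sum-with-$\ell^1$ spaces this is standard up to a harmless logarithmic factor, readily absorbed by shrinking $\delta$.)

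Next, I revisit each of the cases from the proof of Proposition \ref{propo}: the high-low case and the three high-high subcases (the low-high case being symmetric). In every case, the bilinear estimate from Lemma \ref{lem2} is reduced to a finite dyadic sum of products of $L^2$-norms weighted by powers of $N$, $N_1$, $L$, $L_1$, $L_2$, and each of these sums is shown to converge with strictly positive $\eps$-room. Replacing the weight $\cro{L_1+N_1^2}^{1/2}$ on $u$ (from the $b=1/2$ index) by $\cro{L_1+N_1^2}^{1/2-\delta}$ costs at most a factor $\cro{L_1+N_1^2}^{\delta}\le N_1^{3\delta}$ in the main region $L_1\le N_1^3$; in the region $L_1=2^{l}N_1^3$, $l\ge 0$, of Subsection 5.2.3 the extra $2^{l\delta}$ is absorbed by the $2^{-l/2}$ factor already present. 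On the other hand, using $\|u\|_{\S^0_\eps}$ in place of $\|u\|_{\S^{-1}_\eps}$ provides an additional factor $\cro{N_1}$ at frequency $N_1$. For $\delta$ small enough (depending on $\eps$), these two modifications leave the dyadic sums convergent and yield \eqref{est-bil3} with $\nu=\delta$.

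The main obstacle is to perform this substitution uniformly across all subcases, the most delicate being the high-high region $L\gtrsim N_1^2 N$, $L_1\ll N_1^2 N$, $L_2\le N_1^2 N^{1-\eps}$ where the proof of Proposition \ref{propo} uses the Bourgain-type convolution trick to estimate the $Z^{-1,-\frac 12}$-component of the nonlinearity. In that region I would apply the time-restriction to $v$ and observe that the convolution identity in the $L^2$-estimate depends only on support conditions on the modulation variable, so it is unaffected by the reduction of the $b$-index. The extra $\cro{N_1}$ gained from $u\in \S^0_\eps$ then compensates any loss from the modulation weight, and the entire estimate closes with the sought-for contractive factor $T^\delta$.
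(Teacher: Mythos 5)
Your overall strategy --- trading a small amount of modulation regularity for a factor $T^{\delta}$ via time restriction, and using the extra derivative on $u$ to absorb the resulting dyadic losses --- is the same as the paper's, which implements it through the Ginibre--Tsutsumi--Velo bound \re{strichartz}. But your accounting of the loss has a genuine gap in the high--low region where the \emph{low}-frequency factor carries the dominant modulation, i.e. Subsection \ref{zz}, where $1\ll N_1\lesssim N$ and $L_1\sim 2^{l}N^2N_1$. Since $N\gtrsim N_1$, this region lies entirely in $L_1\gtrsim N_1^3$, so it is covered neither by your bound $\cro{L_1+N_1^2}^{\delta}\le N_1^{3\delta}$ (valid only for $L_1\le N_1^3$) nor by your discussion of the high--high case \ref{523}. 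Here replacing $\cro{L_1+N_1^2}^{1/2}$ by $\cro{L_1+N_1^2}^{1/2-\delta}$ costs $(2^{l}N^2N_1)^{\delta}$: the factor $2^{l\delta}$ is indeed absorbed by the $2^{-l/2}$, but $N^{2\delta}$ is a loss at the \emph{high} frequency $N$, while the gain $\cro{N_1}$ from $u\in\S^{0}_{\eps}$ and the available decay $N_1^{-1/4+3\eps}$ both live at the \emph{low} frequency $N_1$. Taking $N_1=N^{\alpha}$ with $\alpha$ small compared to $\delta$ shows that no uniform $\delta>0$ closes the estimate; lowering the index on $v$ instead does not help, since $\cro{L_2+N^2}^{\delta}\ge N^{2\delta}$ even when $L_2=O(1)$.

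The underlying point is that in these parabolic-weighted spaces, lowering the exponent of $\cro{L+N^2}$ always costs at least $N^{2\delta}$ regardless of the size of $L$; the factor $T^{\nu}$ must instead be extracted from a genuinely spare negative power of the pure modulation $\cro{\tau-k^3}\sim L$. That is what the paper does: in Subsection \ref{zz} the convergence margin $(LL_2)^{-\eps/4}$ supplies such a spare power of $L_2$ on the compactly supported factor $v$, and \re{strichartz} converts it into $T^{\mu}$ without touching the weight $\cro{L_2+N^2}^{1/2}$. You would also need to treat the two places where no spare power of $L_1,L_2$ is available: the subregion $N_1\lesssim 1$ of \re{uhuh}, where the paper applies \re{lala} directly to $\|P_{N_1}u\|_{L^2}$, and the $Z^{-1,-\frac12}$ estimate of the last high--high subregion, where $T^{\mu}$ must be pulled out of an $L^2$ norm at the cost of a positive power of $N_1$ --- precisely where the extra regularity $u\in\S^{0}_{\eps}$ is genuinely needed. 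Your sketch asserts these cases go through but does not verify them.
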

 \begin{proof}
It suffices to slightly modify  the proof of Proposition \ref{propo} to make use of  the following result that can be found in   [\cite{GTV}, Lemma 3.1] (see
 also  [\cite{MR2}, Lemma 3.6]):
 For any $\theta >0$, there exists $\mu=\mu(\theta)>0$ such that for any smooth function  $ f$ with compact support in
  time in $[-T,T]$,
\begin{equation}
 \left\| {\mathcal F}^{-1}_{t,x}\left( \frac{\hat{f}(\tau,k)} {\langle\tau - k^3\rangle^{\theta}}\right) \right\|_{L^{2}_{t,x}}
 \lesssim
   T^\mu \|f\|_{L^{2,2}_{t,x}} \;.
 \label{strichartz}
 \end{equation}
 According to \re{est-L2l2} this  ensures, in particular,  that  for any $ w\in \S^0_{3/8}$ with compact support in $[-T,T] $  it holds
  \begin{equation}\label{lala}{
 \| w\|_{L^2_t H^{3/4}}\lesssim  \| w\|_{X^{0,\frac 38,2}_{3/8}}}\lesssim
 T^{\mu(\frac{1}{8})}  \| w\|_{X^{0,\frac 12,2}_{3/8}}\lesssim  T^{\mu(\frac{1}{8})} \|w\|_{\S^0_{3/8}} \; .
 \end{equation}
  It is pretty clear that the interactions between high  frequencies of $ u $ and high or low frequencies of $ v$
   can be treated by following   the proof of Proposition \ref{propo}
   and using \re{lala}. The  region that seems the most dangerous is   the one of interactions between low frequencies of
    $ u $ and  high frequencies of $ v$ in  the proof of Proposition \ref{propo}.
     But actually in this region, except in the subregion $N_1\lesssim 1 $, we can notice that we may  keep some powers of $ L_1 $ or $L_2 $ in the estimates and thus \re{lala} ensures that \re{est-bil3} holds (one can even replaced $\S^{0}_\varepsilon$ by $\S^{-1}_\varepsilon$) .  Finally, in the subregion $N_1\lesssim 1 $, \re{est-bil3}
     follows directly by applying  \re{lala} in the next to the last line in \re{uhuh}.
  \end{proof}
We are now in position   to prove that the application
$$F_\phi^T : u\mapsto \eta(t)\Big[W(t)\phi-\frac 12\L\partial_x(\eta_T u)^2\Big],$$
where $\L$ is defined in (\ref{eq-L}), is contractive on a ball of $\ZZ_\beta$ for a suitable $\beta>0 $
 and $ T>0 $ small enough. Assuming this for a while, the  local part of Theorem \ref{wellposed}  follows by using standard arguments. Note that the uniqueness will hold in the restriction spaces
   $ \widetilde{{\mathcal S}^{-1}_\eps}(\tau) $ endowed with the norm
 $$
 \|u\|_{  \widetilde{{\mathcal S}^{-1}_\eps}(\tau)}:=\inf_ {v\in \widetilde{{\mathcal S}^{-1}_\eps}} \{ \|v\|_{{\widetilde{\mathcal S^{-1}_\eps}}}, \; v\equiv u \mbox{ on } [0,\tau] \}\; .
 $$
 Finally, to see that the solution $ u $ can be extended for all positive times and belongs to $C(\R_+^*;H^\infty) $ it suffices to notice that, according to
  \re{est-L2S-1},
  $u\in {\mathcal S}^{-1}_\eps(\tau) \hookrightarrow L^2(]0,\tau[\times \T)$ . Therefore, for any $0<\tau'<\tau $  there exists
  $t_0\in ]0,\tau'[ $, such that $ u(t_0)$ belongs  to $L^2(\T)$ . Since according to   \cite{MR2}, \re{KdVB} is globally  well-posed in $ L^2(\T) $ with a solution belonging to $ C(\R^*_+;H^\infty(\T)) $,  the conclusion follows.

  In order to prove that $ F_\phi^T $ is contractive, the first step is  to establish the following result.
\begin{proposition}\label{prop-bilZ} For any  $ \beta\ge 1 $ there   exists $0<T=T(\beta)<1$ such that for any $u,v\in\ZZ_{\beta}$ with compact support in $[-T,T]$ we have
\begin{equation}\label{est-bilZ}
\|\L\partial_x(uv)\|_{\ZZ_\beta}\lesssim \|u\|_{\ZZ_\beta}\|v\|_{\ZZ_\beta}.
\end{equation}
\end{proposition}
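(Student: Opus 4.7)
The plan is to prove \eqref{est-bilZ} by choosing near-optimal decompositions of $u$ and $v$ in $\ZZ_\beta$ and bounding each cross-term, with the scale parameter $T$ chosen depending on $\beta$ to absorb the powers of $\beta$ that arise from the high-regularity components. More precisely, for arbitrary $\delta>0$ I would pick decompositions $u=u_1+u_2$, $v=v_1+v_2$ with $u_1,v_1\in\widetilde{\S^{-1}_\eps}$, $u_2,v_2\in\widetilde{\S^0_\eps}$ satisfying
\begin{equation*}
\|u_1\|_{\widetilde{\S^{-1}_\eps}}+\tfrac1\beta\|u_2\|_{\widetilde{\S^0_\eps}}\le(1+\delta)\|u\|_{\ZZ_\beta},
\end{equation*}
and similarly for $v$. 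Multiplying each piece by a smooth cutoff $\eta(t/T)$, one may assume that each $u_i,v_j$ is supported in $[-T,T]$ (with a harmless loss in constants in the relevant norms).

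Next I would expand $uv=u_1v_1+u_1v_2+u_2v_1+u_2v_2$, apply $\L\partial_x$, and simply place the whole thing into the $\widetilde{\S^{-1}_\eps}$ slot of the decomposition defining $\|\cdot\|_{\ZZ_\beta}$ (taking $w_2=0$). The four pieces would be estimated as follows. For the rough-rough term, Proposition~\ref{linearL} together with Proposition~\ref{propo} gives
\begin{equation*}
\|\L\partial_x(u_1v_1)\|_{\widetilde{\S^{-1}_\eps}}\lesssim\|u_1\|_{\S^{-1}_\eps}\|v_1\|_{\S^{-1}_\eps}\lesssim\|u\|_{\ZZ_\beta}\|v\|_{\ZZ_\beta}.
\end{equation*}
For the two mixed terms, the smoothing bilinear estimate \eqref{est-bil3} yields
\begin{equation*}
\|\L\partial_x(u_1v_2)\|_{\widetilde{\S^{-1}_\eps}}\lesssim T^\nu\|u_1\|_{\S^{-1}_\eps}\|v_2\|_{\S^0_\eps}\lesssim T^\nu\beta\|u\|_{\ZZ_\beta}\|v\|_{\ZZ_\beta},
\end{equation*}
and symmetrically for $u_2v_1$. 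For the smooth-smooth term I would use \eqref{est-bil3} together with the embedding $\S^0_\eps\hookrightarrow\S^{-1}_\eps$:
\begin{equation*}
\|\L\partial_x(u_2v_2)\|_{\widetilde{\S^{-1}_\eps}}\lesssim T^\nu\|u_2\|_{\S^0_\eps}\|v_2\|_{\S^{-1}_\eps}\lesssim T^\nu\beta^2\|u\|_{\ZZ_\beta}\|v\|_{\ZZ_\beta}.
\end{equation*}

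Summing the four contributions gives $\|\L\partial_x(uv)\|_{\ZZ_\beta}\lesssim(1+T^\nu\beta^2)\|u\|_{\ZZ_\beta}\|v\|_{\ZZ_\beta}$, and it suffices to choose $T=T(\beta)$ of order $\beta^{-2/\nu}$ (and smaller than $1$) to ensure $T^\nu\beta^2\lesssim1$. The main technical point I anticipate is verifying that truncation by $\eta(t/T)$ behaves well on the spaces $\widetilde{\S^{-1}_\eps}$ and $\widetilde{\S^0_\eps}$ so that the decompositions $u_i,v_j$ can legitimately be taken compactly supported in $[-T,T]$ without destroying the near-optimality of the $\ZZ_\beta$-splitting; this is a routine time-localization issue but requires some care since the $\widetilde{L^\infty_tH^s}$ component and the $Y^{s,\frac12}$ component respond differently to multiplication by $\eta(t/T)$. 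Once this technicality is settled, the choice of $T$ depending on $\beta$ is the only genuinely new ingredient beyond the two bilinear estimates already established.
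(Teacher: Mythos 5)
Your proof follows essentially the same route as the paper: take a near-optimal $\ZZ_\beta$-decomposition $u=u_1+u_2$, $v=v_1+v_2$, place $\L\partial_x(uv)$ entirely in the $\widetilde{\S^{-1}_\eps}$ slot, estimate the rough-rough term by Propositions \ref{linearL} and \ref{propo}, the mixed and smooth-smooth terms by \re{est-bil3} with losses $\beta T^\nu$ and $\beta^2 T^\nu$ respectively, and choose $T\sim\beta^{-2/\nu}$. The only point where you go beyond the paper is in explicitly flagging (and sketching a fix for) the fact that the pieces $u_i,v_j$ of the decomposition need not individually be compactly supported in $[-T,T]$ as required by \re{est-bil3} — a technicality the paper passes over in silence.
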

Assume for the moment that (\ref{est-bilZ}) holds and let $u_0\in H^{-1}$ and $\alpha>0$.
Split the data $u_0$ into low and high frequencies:
$$u_0=P_{\lesssim N}u_0+P_{\gg N}u_0$$
for a dyadic number $N$. Taking $N=N(\alpha)$ large enough, it is obvious to check that $\|P_{\gg N}u_0\|_{H^{-1}}\leq \alpha$. Hence, according to \re{est-lin},
$$\|\eta(\cdot)W(\cdot)P_{\gg N}u_0\|_{\ZZ_\beta}\lesssim \alpha.$$
Using now the $\widetilde{\S^0_\varepsilon} $-part of $\ZZ_\beta$, we control the low frequencies as follows:
$$\|\eta(\cdot)W(\cdot)P_{\lesssim N}u_0\|_{\widetilde{\S^0_\varepsilon}} \lesssim \frac 1\beta \|P_{\lesssim N}u_0\|_{L^2}\lesssim \frac{N}{\beta}\|u_0\|_{H^{-1}}.$$
Thus we get
$$\|\eta(\cdot)W(\cdot)P_{\lesssim N}u_0\|_{\ZZ_\beta}\lesssim \alpha\ \textrm{ for }\ \beta\gtrsim\frac{N \|u_0\|_{H^{-1}}}{\alpha}.$$
Since $\alpha$ can be chosen as small as needed, we conclude with (\ref{est-bilZ}) that $F_\phi^T$ is contractive on a ball of $\ZZ_\beta$ of radius $R\sim \alpha$ as soon as $ \beta \gtrsim  N \|u_0\|_{H^{-1}}/\alpha $
 and $T= T(\beta)$.

\begin{proof}[Proof of Proposition \ref{prop-bilZ}] By definition on the function space $ {\mathcal Z}_\beta $,
there exist $u_1,v_1\in\widetilde{\S^{-1}_\varepsilon}$ and $u_2,v_2\in \widetilde{\S^0_\eps} $ such that $u=u_1+u_2$, $v=v_1+v_2$ and
\begin{align*}\|u_1\|_{\widetilde{\S^{-1}_\varepsilon}}+\frac 1\beta\|u_2\|_{\widetilde{\S^0_\varepsilon}}\leq & 2\|u\|_{\ZZ_\beta},\\ \|v_1\|_{\widetilde{\S^{-1}_\varepsilon}}+\frac 1\beta\|v_2\|_{\widetilde{\S^0_\varepsilon}}\leq & 2\|v\|_{\ZZ_\beta}.
\end{align*}
Thus one can decompose the left-hand side of (\ref{est-bilZ}) as
\begin{align*}
\|\L\partial_x(uv)\|_{\ZZ_\beta} &\lesssim \|\L\partial_x(u_1v_1)\|_{\widetilde{\S^{-1}_\varepsilon}}+\|\L\partial_x(u_1v_2+u_2v_1)\|_{\widetilde{\S^{-1}_\varepsilon}}+\|\L\partial_x(u_2v_2)\|_{\widetilde{\S^{-1}_\varepsilon}}\\
&= I+II+III.
\end{align*}
From the estimates (\ref{est-linNhom}) and (\ref{est-bil}) we get
$$I\lesssim  \|\partial_x(u_1v_1)\|_{\NN^{-1}_\varepsilon}\lesssim \|u_1\|_{S^{-1}_\varepsilon}\|v_1\|_{\S^{-1}_\varepsilon}\lesssim \|u\|_{\ZZ_\beta}\|v\|_{\ZZ_\beta}.$$
On the other hand, we obtain from (\ref{est-bil3}) that
$$III\lesssim T^{\nu}\|u_2\|_{\S^0_\varepsilon}\|v_2\|_{\S^0_\varepsilon}\lesssim \beta^2 T^{\nu}\|u\|_{\ZZ_\beta}
\|v\|_{\ZZ_\beta}.$$
and
\begin{eqnarray*}
II &  \lesssim &  T^{\nu}(\|u_1\|_{\S^{-1}_\varepsilon}\|v_2\|_{\S^0_\varepsilon}+\|u_2\|_{\S^0_\varepsilon}\|v_1\|_{\S^{-1}_\varepsilon})\\
&\lesssim &  \beta T^{\nu}\|u\|_{\ZZ_\beta}\|v\|_{\ZZ_\beta}.
\end{eqnarray*}
We thus get
$$\|\L\partial_x(uv)\|_{\ZZ_\beta}\lesssim (1+(\beta+\beta^2) T^\nu)\|u\|_{\ZZ_\beta}\|v\|_{\ZZ_\beta}.$$
This ensures that (\ref{est-bilZ}) holds for $T\sim \beta^{-2/\nu}\le 1 $.
\end{proof}

\begin{center}Luc Molinet, \\
{\small
  Laboratoire de Math\'ematiques et Physique Th\'eorique, Universit\'e Fran\c cois Rabelais Tours, F\'ed\'eration Denis Poisson-CNRS, Parc Grandmont, 37200 Tours, FRANCE.  {\it Luc.Molinet@lmpt.univ-tours.fr}}\vspace*{4mm}\\
St\'ephane Vento\\
{\small  L.A.G.A., Institut Galil\'ee, Universit\'e Paris 13,\\
93430 Villetaneuse, FRANCE. {\it vento@math.univ-paris13.fr}}
\end{center}


\begin{thebibliography}{99}



\bibitem {Be} {~D.Bekiranov},
{\sl The initial-value problem for the generalized Burgers' equation},
 Diff. Int. Eq., 9 (6) (1996), pp. 1253--1265.

\bibitem{BT} I. Bejenaru and T. Tao, {\sl  Sharp well-posedness and ill-posedness results for a quadratic non-linear {S}chr\"odinger equation.}  J. Funct. Anal.  233  (2006),  no. 1, 228--259.

\bibitem {Bo2}{ J.~Bourgain},
{\sl  Fourier transform restriction phenomena for certain lattice subsets and application to nonlinear evolution equations
II. The KdV equation}, GAFA, 3 (1993), pp. 209--262.

\bibitem {Bo3} J. Bourgain, {\sl
On the Cauchy problem for the Kadomtsev-Petviashvili equation},
GAFA 3 (1993), 315-341.

\bibitem {Bo1} { J.~Bourgain},
{\em Periodic Korteveg de Vries equation with measures as initial data}
, Sel. Math. New. Ser. 3 (1993), pp. 115--159.

\bibitem{CCT1} M. Christ, J. Colliander and T. Tao,
 {\sl Asymptotics, frequency modulation, and low regularity ill-posedness for canonical defocusing equations},
Amer. J. Math. 125 (2003), no. 6, 1235--1293.

\bibitem {Co} { J.~Colliander, M.~Keel, G.~Staffilani, H.~Takaoka and
 T.~Tao},
{\sl Sharp global well-posedness results for periodic and non-periodic
 KdV and modified KdV on $ \R $ and $ \T $}, J. Amer. Math. Soc. {\bf 16} (2003), pp. 705-749.

\bibitem {Di1} { D.B.~Dix},
{\sl Nonuniqueness and uniqueness in the initial-value problem
for Burger's equation}, SIAM J. Math. Anal., 27 (3) (1996), pp.
708--724.

\bibitem {Gerard} { P.~G\'erard},
{\sl Nonlinear Schr{\"o}dinger equations in inhomogeneous media: wellposedness and illposedness of the Cauchy problem},  International Congress of Mathematicians. Vol. III,  157--182, Eur. Math. Soc., Z{\"u}rich, 2006.

\bibitem {G} { J.~Ginibre},
{\sl  Le probl\`eme de Cauchy pour des EDP semi-lin\'eaires p\'eriodiques en variables
d'espace (d'apr\`es Bourgain)}, { in S\'eminaire Bourbaki 796, Ast\'erique 237, 1995,
163--187.}

\bibitem{GTV} {J. Ginibre, Y. Tsutsumi and G. Velo},
{\sl On the Cauchy problem for the Zakharov system}, J. Funct.
Analysis, 133 (1995), pp. 50--68.



\bibitem{KT} T. Kappeler and P. Topalov, {\sl  Global wellposedness of KdV in $H\sp {-1}(\T,\R)$},  Duke Math. J.  {\bf 135}  (2006),  no. 2, 327--360.


\bibitem{KPV}{ C.~E. Kenig, G.~Ponce, and L.~Vega},
{\sl A bilinear estimate with applications to the {K}d{V}
equation}, J. Amer. Math. Soc., {\bf 9} (1996), pp. 573--603.

\bibitem{MV1} L. Molinet and S. Vento, {\sl Sharp ill-posedness and well-posedness results for the KdV-Burgers equation: the real line case} (preprint
 	arXiv:0911.5256v2).

\bibitem{MR2} L. Molinet and F. Ribaud, {\sl On the low regularity of the Korteweg-de Vries-Burgers equation}, I.M.R.N. 37, (2002), pp. 1979-2005.

\bibitem{OS}  E.~Ott and N.~Sudan,
{\sl Damping of solitary waves}, Phys. Fluids, 13(6) (1970), pp.
1432--1434.


\bibitem{Tao2} T. Tao, {\sl Multilinear weighted convolution of {$L\sp 2$}-functions, and applications to nonlinear dispersive equations.} Amer. J. Math. {\bf 123} (2001), no.5, 839--908.


\bibitem{Tao} T. Tao, {\sl Scattering for the quartic generalised Korteweg-de Vries equation} J. Diff. Eq. {\bf 232} (2007), no. 2, 623--651.

\bibitem{tataru} D. Tataru, {\sl On global existence and scattering for the wave maps equation}, Amer. J. Math. 123 (1) (2001),  37--77.

{\sl }

\end{thebibliography}
\end{document}